\theoremstyle{definition}
\newtheorem{theorem}{Theorem}
\newtheorem{corollary}[theorem]{Corollary}
\newtheorem{proposition}[theorem]{Proposition}
\newtheorem{definition}[theorem]{Definition}
\newtheorem{observation}[theorem]{Remark}
\theoremstyle{remark}
\newtheorem{remark}{Remark}
\begin{document}

\title{A fractional Borel-Pompeiu type formula and a related fractional $\psi-$Fueter operator with respect to a vector-valued function}
\small{
\author
{Jos\'e Oscar Gonz\'alez-Cervantes$^{(1)}$ and Juan Bory-Reyes$^{(2)\footnote{corresponding author}}$}
\vskip 1truecm
\date{\small $^{(1)}$ Departamento de Matem\'aticas, ESFM-Instituto Polit\'ecnico Nacional. 07338, Ciudad M\'exico, M\'exico\\ Email: jogc200678@gmail.com\\$^{(2)}$ {SEPI, ESIME-Zacatenco-Instituto Polit\'ecnico Nacional. 07338, Ciudad M\'exico, M\'exico}\\Email: juanboryreyes@yahoo.com
}}

\maketitle
\begin{abstract} 
In this paper we combine the fractional $\psi-$hyperholomorphic function theory with the fractional calculus with respect to another function.  As a main result, a fractional Borel-Pompeiu type formula related to a fractional $\psi-$Fueter operator with respect to a vector-valued function, is proved. 
\end{abstract}

\noindent
\textbf{Keywords.} Quaternionic analysis; Borel-Pompeiu formula; Fractional Calculus; Fractional Fueter operator with respect to a vector-valued function.\\
\textbf{AMS Subject Classification (2020):} 30G30; 30G35; 35R11.

\section{Introduction} 
The extension of real integer-order derivatives to fractional derivatives is an old topic introduced by Leibnitz in 1695. The date September 30, 1695 is regarded as the exact birth-date of the fractional calculus. For historical review of the theory we refer the reader to \cite{MR, Ro}.

The interest in fractional integrals and derivatives (Fractional Calculus) has been growing continuously during the last few years because of numerous applications in recent studies in engineering science.

There exists a vast literature on different definitions of fractional derivatives, see for instance \cite{MKS,OS,O,OM,P,GM}. In \cite{VOL} the authors introduce a unified two-parametric fractional derivative, from which, all the interesting derivatives can be obtained. The study prevent the ambiguous use of the concept of fractional derivative. For a review of definitions of fractional order derivatives and integrals that appear in mathematics, physics, and engineering we refer the reader to \cite{OT}.

The theory of fractional derivatives and integrals with respect to another function was introduced by Erd\'elyi in \cite{E1,E2}, and it was  extensively studied by Osler in \cite{O1,O2,O3}. This operator theory generalizes several classical fractional derivatives, including Riemann-Liouville and Hadamard derivatives among others, see \cite{B,Ka,S} as well as \cite[Section 2.5]{KST} and \cite[Section 18.2]{SKM}. 
 
The skew field of real quaternions, denoted by $\mathbb H$, in combination with modern analytic methods, give rise to the development of the so-called Quaternionic Analysis. The classical works here are \cite{Fu1,Fu2, sudbery}. Nowadays, it relies heavily on results on functions defined on domains in $\mathbb R^4$ with values in $\mathbb H$, associated to a generalized Cauchy-Riemann operator (the so-called $\psi-$Fueter operator), by using a general orthonormal basis in $\mathbb R^4$ (to be named structural set) $\psi$ of $\mathbb H^4$. The last goes back at least as far as \cite{No}. The theory is centered around the concept of $\psi-$hyperholomorphic functions (i.e., null solutions to the $\psi-$Fueter operator). For direct constructions along classical lines we refer the reader to \cite{MS, S1, shapiro1, shapiro2} and the references given there.

Combining a fractional $\psi-$hyperholomorphic function theory with the fractional calculus with respect to another function, we prove as the main result, a fractional Borel-Pompeiu type formula related to a fractional $\psi-$Fueter operator with respect to a vector-valued function to be introduced here for the first time as far as the authors know. As particular case of this study, the main results of \cite{GB}, are recovered.

The structure of the paper is as follows. After this brief introduction, in the preliminary section we have compiled some basic facts of the  quaternionic analysis associated to a structural set $\psi$, such as Stokes and the Borel-Pompieu formulas related to the $\psi$-Fueter operator, as well as a brief summary of the notions of fractional Riemann-Liouville integral and derivative with respect to another function. Section 3 discusses both Stokes and Borel-Pompieu type formulas related to a fractional $\psi-$Fueter operator with respect to a vector-valued function.

\section{Preliminaries} 
\subsection{Riemann-Liouville fractional integral and derivatives with respect to another function}
Some definitions and standard facts on the fractional calculus with respect to another function are reviewed below.

Let $-\infty <a  < b< \infty$, $\alpha > 0$ and $g\in  C^1([a, b], \mathbb R)$ such that $g'(x)\neq 0$ for all $x \in[a, b]$ a non-negative monotonously increasing and continuous function. 

The (left) Riemann-Liouville fractional integral of order $\alpha$ with respect to $g$ is defined for any $f\in L^1([a, b], \mathbb R)$ by
$$( {}_{a}I_g^{\alpha} f )(x) =
\frac{1}{\Gamma(\alpha)}\int_a^x \frac{  f(y) g' (y) }{(g(x)- g(y))^{1-\alpha}}dy,$$
where $a<x<b$, see \cite{AJ,P}
 
Let $AC^1([a, b], \mathbb R)$ the set of real functions $f$ which are continuously differentiable on $[a, b]$ and $f^{'}$ is absolutely continuous on $[a, b]$. 

The (left) Riemann-Liouville fractional derivative of order $\alpha$ with respect to $g$ for any $f\in AC^1([a, b], \mathbb R)$ is given by

\begin{align*} ({}_aD_g^{\alpha} f) (x) = &
\left(\frac{1}{g'(y) } \frac{d}{dy} \right) ( {}_{a}I_g^{1-\alpha} f )(y) \\
= &\frac{\left(\displaystyle\frac{1}{g'(y)} \displaystyle\frac{d}{dy} \right)}{\Gamma (1-\alpha) } \int_a^x \frac{  f(y) g'(y) }{(g(x)- 
g(y))^{\alpha}}dy.
\end{align*}

The following semigroup property is valid
\begin{align}\label{TFCF}
{}_aD_g^{\alpha} \circ {}_{a}I_g^{\alpha}= I, 
\end{align}
where $I$ is the identity operator, see \cite{AJ}. Note that the fractional Riemann-Liouville integral and derivative with respect to another function are well defined linear operators for any $f\in AC^1([a, b], \mathbb R)$. 

Let us remark that, if $g(x)=x$ (resp. g(x)= ln x), then the Riemann-Liouville fractional integral and derivative with respect to $g$ reduces to the standard Riemann-Liouville (resp. the Hadamard), see \cite{KST, MKS, P}.

\subsection{Rudiments of quaternionic analysis}
The skew field of real quaternions $\mathbb H$ is formed by  $x=x_0+x_{1} {\bf i}+x_{2} {\bf j}+x_{3} {\bf k}$, $x_{k}\in \mathbb R, k= 0,1,2,3$, where   the basic elements satisfy $ {\bf i}^{2}= {\bf j}^{2}= {\bf k}^{2}=-1$, $ {\bf i}\, {\bf j}=- {\bf j}\, {\bf i}= {\bf k};  {\bf j}\, {\bf k}=- {\bf j}\, {\bf k}= {\bf i}$ and $ {\bf k}\, {\bf i}=- {\bf k}\, {\bf i}= {\bf j}$. For $x\in \mathbb H$ we define the mapping of quaternionic conjugation: $x\rightarrow {\overline x}:=x_0-x_{1}{\bf i}-x_{2}{\bf j}-x_{3}{\bf k}$. Easily we see that  $x\,{\overline x}={\overline x}\,x=x^{2}_{0}+x^{2}_{1}+x^{2}_{2}+x^{2}_{3}$ and  $\overline {qx}={\overline x}\,\,{\overline q}$ for $q,x\in \mathbb H$. 

The quaternionic scalar product  of $q, x\in\mathbb H$ is given by 
$$\langle q, x\rangle:=\frac{1}{2}(\bar q x + \bar x q) = \frac{1}{2}(q \bar x + x  \bar q).$$

A set of quaternions $\psi=\{\psi_0, \psi_1,\psi_2,\psi_3\}$ is called structural set if 
$\langle \psi_k, \psi_s\rangle =\delta_{k,s} $, 
for  $k, s=0,1,2,3$ and any quaternion $x$  can be rewritten as $x_{\psi} := \sum_{k=0}^3 x_k\psi_k$, 
where $x_k\in\mathbb R$ for all $k$. Given $q, x\in \mathbb H$ we follow the notation used in {\cite{shapiro1}} to write
$$\langle q, x \rangle_{\psi}=\sum_{k=0}^3 q_k x_k,$$ 
where $q_k, x_k\in \mathbb R$ for all $k$.

Given an structural set $\psi$, we will use the mapping
\begin{equation} \label{mapping}
\sum_{k=0}^3 x_k\psi_k \rightarrow (x_0,x_1,x_2,x_3).
\end{equation}
in essential way.

We have to say something about the set of complex quaternions, which are given by  
$$\mathbb H(\mathbb C) =\{q=q_1+  \textsf{i} \ q_2 \ \mid \ q_1,q_2 \in \mathbb H\},$$
where $\textsf{ i}$   is the imaginary unit of $\mathbb C$.
The main difference to the real quaternions is that not all non-zero elements are invertible. There are so-called zero-divisors.

Let us recall that $\mathbb H$ is embedded in $\mathbb H(\mathbb C)$ as follows:
$$\mathbb H =\{q=q_1+   \textsf{i}  \ q_2 \in \mathbb H(\mathbb C)  \ \mid \  q_1,q_2 \in \mathbb H \ \ \textrm{and} \ \ q_2=0\}.$$
The elements of $\mathbb H$ are written in terms of the structural set $\psi$ hence those of $\mathbb H(\mathbb C)$ can be written as $q=\sum_{k=0 }^3 \psi_k q_k,$ where $q_k\in \mathbb C$.
 
Functions $\mathfrak f$ defined in a bounded domain $\Omega\subset\mathbb H\cong \mathbb R^4$ with value in $\mathbb H$ are considered. They may be written as: $\mathfrak f=\sum_{k=0}^3 f_k \psi_k$, where $f_k, k= 0,1,2,3,$ are $\mathbb R$-valued functions in $\Omega$. Properties as continuity, differentiability, integrability and so on, which as ascribed to $\mathfrak f$ have to be posed by all components $f_k$. We will follow standard notation, for example $C^{1}(\Omega, \mathbb H)$ denotes the set of continuously differentiable $\mathbb H$-valued functions defined in $\Omega$. 
 
The left- and the right-$\psi$-Fueter operators are defined by   
${}^{{\psi}}\mathcal D[\mathfrak f] := \sum_{k=0}^3 \psi_k \partial_k \mathfrak f$ and ${}^{{\psi}}\mathcal  D_r[{ \texttt f }] :=  \sum_{k=0}^3 \partial_k { \texttt f } \psi_k$, for all $\mathfrak{ f}, { \texttt f } \in C^1(\Omega,\mathbb H)$, respectively, where $\partial_k \mathfrak f =\displaystyle \frac{\partial \mathfrak f}{\partial x_k}$ for all $k$, {see \cite{shapiro1, shapiro2}. 

Particularly,  {if $\partial \Omega$ is a 3-dimensional smooth surface then the Borel-Pompieu formula shows that
\begin{align}\label{BorelHyp}  &  \int_{\partial \Omega}(K_{\psi}(y-x)\sigma_{y}^{\psi} \mathfrak{ f}(y)  +  { \texttt f }(y)   \sigma_{y}^{\psi} K_{\psi}(y-x) ) \nonumber  \\ 
&  - 
\int_{\Omega} (K_{\psi} (y-x) {}^{\psi}\mathcal D [\mathfrak{ f}] (y) + {}^{{\psi}}\mathcal  D_r [{ \texttt f }] (y) K_{\psi} (y-x)
     )dy   \nonumber \\
		=  &  \left\{ \begin{array}{ll}  \mathfrak{ f}(x) + { \texttt f }(x) , &  x\in \Omega,  \\ 0 , &  x\in \mathbb H\setminus\overline{\Omega}.                     
\end{array} \right. 
\end{align} 
{Differential and integral versions of Stokes' formulas for the $\psi$-hyperholomorphic functions theory are given by 
\begin{align}\label{StokesHyp} \int_{\partial \Omega} { \texttt f }\sigma^\psi_x \mathfrak{ f} =  &   \int_{\Omega } \left( { \texttt f } 
{}^\psi \mathcal  D[\mathfrak {f}] + {}^{{\psi}}\mathcal  D_r[{ \texttt f }] \mathfrak {f}\right)dx,
\end{align}
for all $\mathfrak {f},{ \texttt f } \in C^1(\overline{\Omega}, \mathbb H)$},  {see \cite{shapiro1, shapiro2, sudbery}}. Here, $d$ stands for the exterior differentiation operator, $dx$ denotes the differential form of the 4-dimensional volume in $\mathbb R^4$ and  
$$\sigma^{{\psi} }_{x}:=-sgn\psi \left( \sum_{k=0}^3 (-1)^k \psi_k d\hat{x}_k\right)$$ 
is the quaternionic differential form of the 3-dimensional volume in $\mathbb R^4$ according  to $\psi$, where $d\hat{x}_k  = dx_0 \wedge dx_1\wedge dx_2  \wedge  dx_3 $ omitting factor $dx_k$.  {In addition,} $sgn\psi$ is $1$, or $-1$,  if  $\psi$ and  $\psi_{std}:=\{1, {\bf i}, {\bf j}, {\bf k}\}$ have the same orientation, or not, respectively.  {Note that}, $|\sigma^{{\psi} }_{x}| = dS_3$  is the differential form of the 3-dimensional volume in $\mathbb R^4$ and write $\sigma_x=\sigma^{{\psi_{std}} }_{x}$. Let us recall that the $\psi$-hyperholomorphic Cauchy Kernel is given by 
 \[ K_{\psi}(y- x)=\frac{1}{2\pi^2} \frac{ \overline{y_{\psi} - x_{\psi}}}{|y_{\psi} - x_{\psi}|^4},\]
and the integral operator 
$${}^{\psi}\mathcal T[\mathfrak{ f}](x) = \int_{\Omega} K_{\psi} (y-x) \mathfrak{ f}  (y) dy$$ 
defined for all $\mathfrak {f}\in L_2(\Omega,\mathbb H)\cup C(\Omega,\mathbb H)$ satisfies 
\begin{align}\label{FueterInv}{}^{\psi}\mathcal D \circ{}^{\psi}\mathcal T[\mathfrak{ f}]=\mathfrak {f}, \ \ \forall \mathfrak{ f}\in L_2(\Omega,\mathbb H)\cup C(\Omega,\mathbb H).
\end{align} 
This can be found in \cite{MS, S1, shapiro1, shapiro2}.  

\section{Main results}
For simplicity of notation, we write $\vec{\alpha}$ and $\vec{\beta}$ instead of the vectors $(\alpha_0, \alpha_1,\alpha_2,\alpha_3)$ and $(\beta_0, \beta_1, \beta_2, \beta_3)$ both in $(0,1)^4$.

\subsection{Fractional $\psi$-Fueter operator of order $\vec{\alpha}$}
\begin{definition}
Let $a=\sum_{k=0}^3\psi_k a_k, b=\sum_{k=0}^3\psi_k b_k \in \mathbb H$ such that $a_k< b_k$ for all $k$. Write 
\begin{align*}  {J_a^b }:= &  \{  \sum_{k=0}^3\psi_k x_k \in \mathbb H \ \mid \ a_k< x_k < b_k, \  \ k=0,1,2,3\} \\
 = & (a_0,b_0) \times (a_1,b_1) \times (a_2,b_2)  \times (a_3,b_3) ,
\end{align*}
and define $m(J_a^b):=(b_0-a_0) (b_1-a_1)(b_2-a_2)(b_3-a_3)$. 
\end{definition}

Set $\mathfrak{ f}=\sum_{i=0}^3\psi_i f_i\in AC^1(J_a^b,\mathbb H)$; i.e., the real components $f_i, i=0,1,2,3$ of $\mathfrak{f}$, belongs to $AC^1((a_i,b_i),\mathbb R)$. 

The mapping $x_j \mapsto f_i(q_0,\dots,x_j,\dots, q_3)$ belongs to $AC^1((a_i, b_i), \mathbb R)$ for each $q\in J_a^b$ and all $i, j=0,1,2,3$.

Now, given $q, x\in  J_a^b $ and $i, j=0,\dots, 3$, the (left) fractional Riemann-Liouville integral of order {$\alpha_j$} with respect to a monotonously increasing functions $g_j\in  C^1([a_j, b_j],\mathbb R)$ for the mapping $x_j \mapsto f_i(q_0,\dots,x_j,\dots, q_3 )$ is defined by 
$$   ({\bf I}_{a_j^+, g _j}^{      {  \alpha_j  }   } f_i)(q_0, \dots, x_j, \dots, q_3)  
=\frac{1}{\Gamma({  \alpha_j  }  )} \int_{a_j}^{x_j} \frac{f_i  
(q_0, \dots,  y _j, \dots, q_3) g _j'( y _j)
}{(g _j(x_j)-  g _j( y _j) )^{1-   {  \alpha_j }  }  } d y _j.$$ 
By the above, as $\displaystyle \mathfrak{ f}=\sum_{i=0}^3 \psi_i f_i$ it follows that 
\begin{align*}  ({\bf I}_{a_j^+,g_j}^{    {  \alpha_{j}  } } \mathfrak{ f} )(q_0, \dots, x_j, \dots, q_3) := &
\frac{1}{\Gamma(    {  \alpha_j }   )} \int_{a_j}^{x_j} \frac{\mathfrak{ f}  
(q_0, \dots,  y _j, \dots, q_3) g _j'( y _j)
}{ (g _j(x_j)-  g _j( y _j) )^{1-   {  \alpha_j }  } }  d y _j  \\
= & 
\sum_{i=0}^3 \psi_i ({\bf I}_{{a_j,g_j}^+}^{ { \alpha_j }   } f_i)(q_0, \dots, x_j, \dots, q_3)  .
\end{align*}     
for every $\mathfrak{ f} \in AC^1(J_a^b,\mathbb H)$ and $q, x \in J_a^b$.

What is more, the (left) fractional Riemann-Liouville derivative of order $\alpha_j$ with respect to a monotonously increasing function $g_k\in  C^1[a, b]$ for all $k=0,1,2,3$ with $g_k' \neq 0$ for $k = 0,1,2,3$ for the mapping $x_j\mapsto \mathfrak{ f}(q_0, \dots, x_j, \dots, q_3)$is given as
\begin{align*} 
D _{a_j^+, g_j }^{      {  \alpha_j  }   } \mathfrak{ f} (q_0, \dots ,  x_j ,\dots ,  q_3) = &
\left(\frac{1}{g_j'(x_j) }  \frac{\partial   }{\partial x_j} \right) 
 \sum_{i=0}^3 \psi_i ({\bf I}_{a_j^+, g_j}^{      {  \alpha_j  }    } f_i)(q_0, \dots, x_j, \dots, q_3) \\
 = &
 \frac{\left(\displaystyle\frac{1}{g_j'(x_j) } \displaystyle \frac{\partial   }{\partial x_j} \right)}{\Gamma({\alpha_j})} \int_{a_j}^{x_j} \frac{ \mathfrak{f} (q_0, \dots,  y _j, \dots, q_3) g_j'( y _j)}{(g_j (x_j)-  g_j( y _j) )^{1-{\alpha_j}}}dy_j .
\end{align*}
\begin{remark}
Note that ${\bf I}_{a_j^+, g_j}^{{\alpha_j}}\mathfrak{ f}$ and $D _{a_j^+,g_j}^{{\alpha_j}}\mathfrak{ f}$ are $\mathbb H(\mathbb C)$-valued functions for every $j$. In a similar way we can introduce the (right) fractional Riemann-Liouville integral and derivative, to be denoted by $({\bf I}_{b_j^-, g_j}^{{\alpha_j}}\mathfrak{ f})$ and $D _{b_j^-, g_j}^{{\alpha_j}}\mathfrak{ f}$ respectively, but we will not develop this point here. 
\end{remark}
\begin{definition} Let $\mathfrak{ f}, { \texttt f } \in AC^1(J_a^b,\mathbb H)$ and let the vector-valued function ${\bf g}:=(g_0,g_1,g_2,g_3)$ with  monotonously increasing components $ g_k\in  C^1[a_k, b_k]$  with $g_k' \neq 0$ for $k = 0,1,2,3$. The (left) fractional $\psi$-Fueter operator of order $\vec{\alpha}$  with respect to ${\bf g}$ is defined by
\begin{align*} 
{}^{\psi}\mathfrak D_{a,{\bf g}}^{\vec{\alpha}}[\mathfrak{ f}] (q,x):= & \sum_{j=0}^3 \psi_j( D _{a_j^+, g_j}^{{\alpha_j}}\mathfrak{ f})(q_0, \dots, x_j , \dots,  q_3)    \\
= &
\sum_{j=0}^3 \psi_j  \frac{\left(\displaystyle\frac{1}{g_j'(x_j) }  \displaystyle\frac{\partial   }{\partial x_j} \right)}{\Gamma(      {  \alpha_j  }   )} \int_{a_j}^{x_j} \frac{\mathfrak{ f}  
(q_0, \dots,  y _j, \dots, q_3) g_j'( y _j)
}{(g_j (x_j)-  g_j( y _j) )^{1-      {  \alpha_j  }      }} d y _j.
\end{align*}
Particularly, ${}^{\psi}\mathfrak D_{a,{\bf g} }^{\vec{\alpha}}[\mathfrak{f}] (q,x) \mid_{x=q} ={}^{\psi}\mathfrak D_{a,{\bf g} }^{\vec{\alpha}}[\mathfrak{f}] (q) $; i.e., it is   ${}^{\psi}\mathfrak D_{a,{\bf g}}^{\vec{\alpha}}[\mathfrak{f}] $ at point $q$.
\end{definition}
Observe that  $q$ is considered a fixed point since the integration and derivation variables are the real components of $x$ and ${}^{\psi}\mathfrak D_{a,{\bf g} }^{\vec{\alpha}}[{ \mathfrak f }](q,\cdot)$ is a $\mathbb H(\mathbb C)$-valued function.

\begin{remark}
Taking into account the non-commutativity of the $\mathbb H-$multiplication it is natural to introduce the right hand side analogue of ${}^{\psi}\mathfrak D_{a,{\bf g}}^{\vec{\alpha}}[\mathfrak{ f}]$:
\begin{align*} 
 {}^{\psi}\mathfrak D_{r,a,{\bf g}}^{\vec{\alpha}}[{ \texttt f }] (q,x):= & 
\sum_{j=0}^3  \frac{\left(\displaystyle\frac{1}{g_j'(x_j) } \displaystyle \frac{\partial   }{\partial x_j} \right)}{\Gamma(      {  \alpha_j  }   )} \int_{a_j}^{x_j} \frac{{ \texttt f }  (q_0, \dots,  y _j, \dots, q_3) g_j'( y _j)
}{(g_j (x_j)-  g_j( y _j) )^{1-      {  \alpha_j  }      }} d y _j  \psi_j,
\end{align*}
for $q, x\in J_a^b$. 
\end{remark}

A key observation is that if $g_k(x)= x$ for all $x\in [a_k, b_k]$ and  $k=0,1,2,3$ then ${}^{\psi}\mathfrak D_{a,{\bf g}}^{\vec{\alpha}}$ becomes at the quaternionic fractional operator presented in \cite{GB}. Therefore,  for  $0<a_k<b_k$ with   $k=01,2,3$ considering ${\bf ln}(x) = (\ln x_0,\ln x_1, \ln x_2, \ln x_3)$  for all $x_k\in [a_k, b_k]$ and  $k=0,1,2,3$ we obtain the fractional $\psi$-Fueter operator of order $\vec{\alpha}$  with respect to ${\bf ln}$ associated to the Hadamard fractional derivative.  
\begin{definition}
Given $\mathfrak{f}, { \texttt f } \in AC^1(J_a^b,\mathbb H)$   define 
\begin{align*}    
   {}^{\psi}\mathcal I_{a,{\bf g} }^x [\mathfrak{f} ] (q,x,\vec{\alpha})    
	 := &\sum_{j=0}^3 \frac{1}{\Gamma(      {  \alpha_j  }   )} \int_{a_j}^{x_j} \frac{\mathfrak{f}   
(q_0, \dots,  y _j, \dots, q_3) g_j'( y _j)
}{(g_j (x_j)-  g_j( y _j) )^{1-      {  \alpha_j  }      }} d y _j \\
	= &\sum_{j=0}^3  ({\bf I}_{a_j^+,g_j}^{    { 1- \alpha_{j}  } } {\mathfrak f} )(q_0, \dots, x_j, \dots, q_3) ,\\
	 {}^{\psi}\mathfrak C_{a,{\bf g}}^{\vec{\alpha}}[{\mathfrak f}](q, x):= &   \sum_{j=0}^3 (g_j'(x_j)-1)   \psi_j \dfrac{1}{g_j'(x_j)} \dfrac{\partial}{\partial x_j} ({\bf I}_{a_j^+,g_j}^{    {  \alpha_{j}  } } {\mathfrak f} )(q_0, \dots, x_j, \dots, q_3) \\
=&  \sum_{j=0}^3 (g_j'(x_j) - 1 ) \psi_j D _{a_j^+, g_j }^{      {  \alpha_j  }   } [{\mathfrak f} ](q_0, \dots ,  x_j ,\dots ,  q_3) ,\\
{}^{\psi}\mathfrak C_{r,a,{\bf g} }^{\vec{\alpha}}[{{ \texttt f }}  ](q, x)  := &    \sum_{j=0}^3 (g_j'(x_j) - 1 )  D_{a_j^+, g_j }^{      {  \alpha_j  }   } [{{ \texttt f }} ](q_0, \dots ,  x_j ,\dots ,  q_3)  \psi_j,\\
{}^{\psi}\mathfrak P_{a,{\bf g} }^{\vec{\alpha}}[{\mathfrak f} ](q, x)  := &  \sum_{j=0}^3  D _{a_j^+, g_j }^{ 1- \alpha_j } [{\mathfrak f} ](q_0, \dots ,  x_j ,\dots ,  q_3)  ,
\end{align*}
\end{definition}
	
\begin{observation}
If $0<a_k<b_k $ for $k=0,1,2,3$ the previous operators, associated to the Hadamard fractional derivative, are given by
 \begin{align*} 
{}^{\psi}\mathfrak D_{a,{\bf ln} }^{\vec{\alpha}}[{\mathfrak f} ] (q,x) =  & \sum_{j=0}^3 \psi_j  \frac{ x_j }{\Gamma(      {  \alpha_j  }   )} \frac{\partial   }{\partial x_j} \int_{a_j}^{x_j} \frac{ {\mathfrak f}   
(q_0, \dots,  y _j, \dots, q_3)  
}{  y _j  (\ln(x_j)-  \ln( y _j) )^{1-      {  \alpha_j  }      }} d y _j , \\
 {}^{\psi}\mathfrak D_{r,a,{\bf ln} }^{\vec{\alpha}}[{ \texttt f } ] (q,x) = & 
\sum_{j=0}^3  \frac{ x_j   }{\Gamma(        \alpha_j     ) }  \frac{\partial   }{\partial x_j} \int_{a_j}^{x_j} \frac{{ \texttt f }  (q_0, \dots,  y _j, \dots, q_3)   
}{   y _j (\ln(x_j)-  \ln( y _j) )^{1-      \alpha_j        }} d y _j  \psi_j ,\\
   {}^{\psi}\mathcal I_{a,{\bf ln } }^x [\mathfrak {f}] (q,x,\vec{\alpha})    
	 := &\sum_{j=0}^3 \frac{1}{\Gamma(      {  \alpha_j  }   )} \int_{a_j}^{x_j} \frac{\mathfrak {f}  
(q_0, \dots,  y _j, \dots, q_3)   
}{ y _j (\ln _j (x_j)-  \ln _j( y _j) )^{1-      {  \alpha_j  }      }  } d y _j ,\\
	 {}^{\psi}\mathfrak C_{a,{\bf ln} }^{\vec{\alpha}}[\mathfrak {f}](q, x):= &   \sum_{j=0}^3 (  1- x_j )   \psi_j 
	\dfrac{\partial}{\partial x_j} ({\bf I}_{a_j^+,\ln _j}^{    {  \alpha_{j}  } } \mathfrak {f} )(q_0, \dots, x_j, \dots, q_3) .
\end{align*}		
for any $\mathfrak {f},  { \texttt f } \in AC^1(J_a^b,\mathbb H)$. 		
\end{observation}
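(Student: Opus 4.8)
The plan is to verify the four displayed identities by direct specialization: the operators ${}^{\psi}\mathfrak D_{a,{\bf g}}^{\vec{\alpha}}$, ${}^{\psi}\mathfrak D_{r,a,{\bf g}}^{\vec{\alpha}}$, ${}^{\psi}\mathcal I_{a,{\bf g}}^x$ and ${}^{\psi}\mathfrak C_{a,{\bf g}}^{\vec{\alpha}}$ are already defined for an arbitrary vector-valued monotone function ${\bf g}$, so the observation amounts to reading off what each definition becomes when ${\bf g} = {\bf ln}$. First I would check that the standing hypotheses of the earlier definitions hold in this case: since $0 < a_k < b_k$ for $k = 0,1,2,3$, each component $g_k = \ln$ is $C^1$ on $[a_k,b_k]$, strictly increasing, and satisfies $g_k'(x_k) = 1/x_k \neq 0$ there. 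Hence all four operators are well defined on $AC^1(J_a^b,\mathbb H)$ and the specialization is legitimate.

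Next I would substitute $g_j'(x_j) = 1/x_j$ and $g_j'(y_j) = 1/y_j$ into each definition. In ${}^{\psi}\mathfrak D_{a,{\bf g}}^{\vec{\alpha}}$ the outer factor $1/g_j'(x_j)$ becomes $x_j$, which is pulled in front of the $\partial/\partial x_j$ derivative; the inner weight $g_j'(y_j)$ supplies the factor $1/y_j$ in the integrand, and the kernel $(g_j(x_j) - g_j(y_j))^{1-\alpha_j}$ turns into $(\ln(x_j) - \ln(y_j))^{1-\alpha_j}$. This produces the stated formula for ${}^{\psi}\mathfrak D_{a,{\bf ln}}^{\vec{\alpha}}$, and the identical computation with $\psi_j$ placed on the right yields ${}^{\psi}\mathfrak D_{r,a,{\bf ln}}^{\vec{\alpha}}$. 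The integral operator ${}^{\psi}\mathcal I_{a,{\bf g}}^x$ carries no outer derivative, so here only the inner weight $g_j'(y_j) = 1/y_j$ and the logarithmic kernel need to be inserted to recover its $\bf ln$-version.

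The one step worth writing out explicitly is ${}^{\psi}\mathfrak C_{a,{\bf g}}^{\vec{\alpha}}$, whose coefficient is $(g_j'(x_j) - 1)\,\psi_j\,\tfrac{1}{g_j'(x_j)}$. Substituting $g_j'(x_j) = 1/x_j$ collapses the scalar part via $\bigl(\tfrac{1}{x_j} - 1\bigr)\cdot x_j = 1 - x_j$, so the combined coefficient is exactly $(1 - x_j)\psi_j$ multiplying $\dfrac{\partial}{\partial x_j}({\bf I}_{a_j^+,\ln_j}^{\alpha_j}\mathfrak f)$, as claimed. Thus there is no genuine obstacle: the statement is a routine specialization rather than a result with independent content, and the only point requiring a moment's care is the algebraic cancellation $(g_j'(x_j) - 1)/g_j'(x_j) = 1 - x_j$ in the ${}^{\psi}\mathfrak C$ term, together with keeping track of which $1/x_j$ arises from the outer factor $1/g_j'(x_j)$ and which $1/y_j$ arises from the inner weight $g_j'(y_j)$.
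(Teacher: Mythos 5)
Your proposal is correct and follows exactly the route the paper intends: the observation is stated as a remark without proof precisely because it is the direct specialization $g_j=\ln$, $g_j'(x_j)=1/x_j$, $g_j'(y_j)=1/y_j$ in the general definitions, and you verify the hypotheses ($0<a_k<b_k$ makes $\ln$ a valid choice of $g_k$) and the one nontrivial cancellation $(g_j'(x_j)-1)\frac{1}{g_j'(x_j)}=(\frac{1}{x_j}-1)x_j=1-x_j$ in the ${}^{\psi}\mathfrak C$ term. Nothing is missing.
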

		
\begin{proposition} \label{identities}
Assume that $\mathfrak{f}, {\texttt{f}} \in AC^1(J_a^b,\mathbb H)$. Then we have  
\begin{enumerate}
\item \begin{align*}
{}^{\psi}\mathcal D_x \circ    {}^{\psi}\mathcal I_{a,{\bf g}}^x [\mathfrak{f}] (q,x,\vec{\alpha}) = &  {}^{\psi}\mathfrak C_{a,{\bf g} }^{\vec{\alpha}}[\mathfrak{f}](q, x) + {}^{\psi}\mathfrak D_{a,{\bf g} }^{\vec{\alpha}}[\mathfrak{f}](q, x)  ,
\\
 {}^{\psi}\mathcal D_{r,x} \circ    {}^{\psi}\mathcal I_{a,{\bf g} }^x [{ \texttt f } ] (q,x,\vec{\alpha}) = &  {}^{\psi}\mathfrak C_{r,a,{\bf g}}^{\vec{\alpha}}[{ \texttt f }](q, x) + {}^{\psi}\mathfrak D_{r,a,{\bf g} }^{\vec{\alpha}}[{ \texttt f }](q, x),
\end{align*}
\item 
\begin{align*}
 {}^{\psi}\mathfrak P_{a,{\bf g} }^{\vec{\alpha}}\circ {}^{\psi}\mathcal I_{a,{\bf g} }^x [\mathfrak{f}] (q,x,\vec{\alpha}) = & 
\sum_{j= 0}^3   \mathfrak{f}  (q_0, \dots, x_k , \dots, q_3)  \\ 
 &  \ +  \sum_{{   { \begin{array}{c}j, k=0 \\ j\neq k  \end{array}} }}^3 
 \frac{( {\bf I}_{a_k ^+,g_k }^{    { 1- \alpha_{k }  } } \mathfrak{f} )(q_0, \dots, x_k , \dots, q_3)}{ \Gamma(1-\alpha_j)   (   g_j(x_j) - g_j(a) )^{1-\alpha_j}  }   , 
\end{align*}
\item 
\begin{align*}
{}^{\bar \psi}\mathcal D_x  \circ {}^{\psi}\mathfrak D_{a,{\bf g} }^{\vec{\alpha}}[\mathfrak{f}](q, x)  = &    \Delta_{\mathbb R^2} \circ    {}^{\psi}\mathcal I_{a,{\bf g} }^x [\mathfrak{f} ] (q,x,\vec{\alpha}) - {}^{\bar \psi}\mathcal D_x \circ {}^{\psi}\mathfrak C_{a,{\bf g} }^{\vec{\alpha}}[\mathfrak{f}](q, x)    ,
 \\
 {}^{\bar\psi}\mathcal D_{r,x} \circ{}^{\psi}\mathfrak D_{r,a,{\bf g} }^{\vec{\alpha}}[{ \texttt f }](q, x) = &   \Delta_{\mathbb R^2} \circ    {}^{\psi}\mathcal I_{a,{\bf g} }^x [{ \texttt f }] (q,x,\vec{\alpha})  - {}^{\bar\psi}\mathcal D_{r,x} \circ  {}^{\psi}\mathfrak C_{r,a,{\bf g} }^{\vec{\alpha}}[{ \texttt f }](q, x)  ,\\
	  \end{align*}
\end{enumerate}
		\end{proposition}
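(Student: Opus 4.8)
The plan is to establish the three parts in turn, deriving the third as a direct consequence of the first. Everything hinges on one structural remark about ${}^{\psi}\mathcal I_{a,{\bf g}}^x$: each summand $({\bf I}_{a_j^+,g_j}^{\alpha_j}\mathfrak{f})(q_0,\dots,x_j,\dots,q_3)$ is, viewed as a function of $x=(x_0,x_1,x_2,x_3)$, a function of the single variable $x_j$ alone, since every other slot is frozen at the corresponding component of the fixed point $q$ and the integration variable $y_j$ ranges only up to $x_j$.

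For the first pair of identities I would apply ${}^{\psi}\mathcal D_x=\sum_{k=0}^3\psi_k\partial_{x_k}$ term by term to ${}^{\psi}\mathcal I_{a,{\bf g}}^x[\mathfrak{f}]$. By the single-variable dependence just noted, $\partial_{x_k}$ annihilates the $j$-th summand whenever $k\neq j$, so the double sum collapses onto its diagonal and only $\sum_{j=0}^3\psi_j\partial_{x_j}({\bf I}_{a_j^+,g_j}^{\alpha_j}\mathfrak{f})$ survives. The definition of the fractional derivative then yields $\partial_{x_j}({\bf I}_{a_j^+,g_j}^{\alpha_j}\mathfrak{f})=g_j'(x_j)\,D_{a_j^+,g_j}^{\alpha_j}\mathfrak{f}$, so the expression becomes $\sum_{j=0}^3\psi_j\,g_j'(x_j)\,D_{a_j^+,g_j}^{\alpha_j}\mathfrak{f}$. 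Writing $g_j'(x_j)=\bigl(g_j'(x_j)-1\bigr)+1$ splits this sum precisely into ${}^{\psi}\mathfrak C_{a,{\bf g}}^{\vec{\alpha}}[\mathfrak{f}]+{}^{\psi}\mathfrak D_{a,{\bf g}}^{\vec{\alpha}}[\mathfrak{f}]$, which is the claim. The right-handed identity is obtained the same way, keeping each $\psi_j$ on the right and using ${}^{\psi}\mathcal D_{r,x}$ together with ${}^{\psi}\mathfrak C_{r,a,{\bf g}}^{\vec{\alpha}}$ and ${}^{\psi}\mathfrak D_{r,a,{\bf g}}^{\vec{\alpha}}$.

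For the second part I would compose ${}^{\psi}\mathfrak P_{a,{\bf g}}^{\vec{\alpha}}=\sum_{j=0}^3 D_{a_j^+,g_j}^{1-\alpha_j}$ with ${}^{\psi}\mathcal I_{a,{\bf g}}^x[\mathfrak{f}]=\sum_{k=0}^3({\bf I}_{a_k^+,g_k}^{1-\alpha_k}\mathfrak{f})(q_0,\dots,x_k,\dots,q_3)$ and split the resulting double sum into its diagonal ($j=k$) and off-diagonal ($j\neq k$) blocks. On the diagonal the semigroup identity \eqref{TFCF}, applied with parameter $1-\alpha_k$, gives $D_{a_k^+,g_k}^{1-\alpha_k}\circ{\bf I}_{a_k^+,g_k}^{1-\alpha_k}=I$, so each diagonal term returns $\mathfrak{f}(q_0,\dots,x_k,\dots,q_3)$ and reproduces the first sum on the right-hand side. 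For the off-diagonal terms I would again invoke the single-variable remark: for $j\neq k$ the function $({\bf I}_{a_k^+,g_k}^{1-\alpha_k}\mathfrak{f})(q_0,\dots,x_k,\dots,q_3)$ is constant in $x_j$, so applying $D_{a_j^+,g_j}^{1-\alpha_j}$ reduces to the fractional derivative of a constant, $D_{a_j^+,g_j}^{\beta}[c]=c\,(g_j(x_j)-g_j(a))^{-\beta}/\Gamma(1-\beta)$, which produces exactly the boundary kernel displayed in the off-diagonal sum multiplying $({\bf I}_{a_k^+,g_k}^{1-\alpha_k}\mathfrak{f})$. Collecting the diagonal and off-diagonal contributions gives the stated formula. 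I expect this off-diagonal step to be the main obstacle: unlike the classical integer-order setting, a constant has a nonvanishing fractional derivative, and it is precisely this nonzero term that generates the correction sum, so the bookkeeping of the orders and of the $a$-dependent boundary kernel must be carried out carefully.

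For the third part I would simply differentiate the first identity of Part (1) on the left by ${}^{\bar\psi}\mathcal D_x$. This gives ${}^{\bar\psi}\mathcal D_x\circ{}^{\psi}\mathcal D_x\circ{}^{\psi}\mathcal I_{a,{\bf g}}^x[\mathfrak{f}]={}^{\bar\psi}\mathcal D_x\circ{}^{\psi}\mathfrak C_{a,{\bf g}}^{\vec{\alpha}}[\mathfrak{f}]+{}^{\bar\psi}\mathcal D_x\circ{}^{\psi}\mathfrak D_{a,{\bf g}}^{\vec{\alpha}}[\mathfrak{f}]$, and the factorization ${}^{\bar\psi}\mathcal D_x\circ{}^{\psi}\mathcal D_x=\Delta_{\mathbb R^2}$, which rests on the structural-set relation $\bar\psi_k\psi_l+\bar\psi_l\psi_k=2\delta_{kl}$, lets me replace the leading composition by $\Delta_{\mathbb R^2}$. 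Solving for ${}^{\bar\psi}\mathcal D_x\circ{}^{\psi}\mathfrak D_{a,{\bf g}}^{\vec{\alpha}}[\mathfrak{f}]$ then yields the first claim, and the right-handed version follows verbatim with ${}^{\bar\psi}\mathcal D_{r,x}$. The only delicate point here is the order of the non-commuting compositions, which is exactly what the symmetric structural-set relation controls.
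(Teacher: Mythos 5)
Your proposal is correct and follows essentially the same route as the paper: parts (1) and (3) are the "direct computations" the paper omits (the collapse of $\sum_k\psi_k\partial_{x_k}$ onto the diagonal via the single-variable dependence, the splitting $g_j'=(g_j'-1)+1$, and the factorization ${}^{\bar\psi}\mathcal D_x\circ{}^{\psi}\mathcal D_x=\Delta$), while for part (2) the paper uses exactly your diagonal/off-diagonal decomposition, with \eqref{TFCF} on the diagonal and the nonvanishing fractional derivative of a constant producing the correction sum. The only point of divergence is cosmetic: your textbook formula $D_{a_j^+,g_j}^{\beta}[c]=c\,(g_j(x_j)-g_j(a))^{-\beta}/\Gamma(1-\beta)$ yields $\Gamma(\alpha_j)$ where the paper's displayed kernel has $\Gamma(1-\alpha_j)$, a normalization discrepancy traceable to the paper's own inconsistent conventions for the order of ${\bf I}$ versus $D$ rather than to any flaw in your argument.
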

	\begin{proof}
\begin{enumerate}
\item and 3. Follow from direct computations.
\item 	From \eqref{TFCF}
\begin{align*}  & {}^{\psi}\mathfrak P_{a,{\bf g} }^{\vec{\alpha}}\circ {}^{\psi}\mathcal I_{a,{\bf g} }^x [\mathfrak{f}] (q,x,\vec{\alpha}) =  
\sum_{j=k=0}^3  D _{a_j^+, g_j }^{ 1- \alpha_j } ( {\bf I}_{a_k ^+,g_k }^{    { 1- \alpha_{k }  } } \mathfrak{f} )(q_0, \dots, x_k , \dots, q_3)  \\ 
= &  
\sum_{j= 0}^3   \mathfrak{f}  (q_0, \dots, x_k , \dots, q_3)  +  \sum_{{   { \begin{array}{c}j, k=0 \\ j\neq k  \end{array}} }}^3 
 \frac{( {\bf I}_{a_k ^+,g_k }^{    { 1- \alpha_{k }  } } \mathfrak{f} )(q_0, \dots, x_k , \dots, q_3)}{ \Gamma(1-\alpha_j)   (   g_j(x_j) - g_j(a) )^{1-\alpha_j}  }   ,\\ 
 \end{align*}
	where the identity
	\begin{align*}  D _{a_j^+, g_j }^{ 1- \alpha_j }  [1] = & \frac{1}{\Gamma(1-\alpha_j)} \frac{1}{g'_j(x_j)}   \frac{\partial}{ \partial x_j}  \int_{a_j}^{x_j} \frac{g'_j (y_j)  }{ ( g_j (x_j) - g_j (y_j) )^{1-\alpha_j}    } dy_j\\
		= &\frac{1}{ \Gamma(1-\alpha_j)   (   g_j(x_j) - g_j(a) )^{1-\alpha_j}  }, 
		\end{align*}
was applied.
	\end{enumerate}
	\end{proof}
\begin{proposition}\label{Stokes}(Stokes type integral formula induced by  ${}^{\psi}\mathfrak D_{a,{\bf g} }^{\vec{\alpha}}$)  
Suppose that ${\bf g}= (g_0,g_1,g_2,g_3)$ and ${\bf h}= (h_0,h_1,h_2,h_3)$ are two vector-valued functions with monotonously increasing components such that $g_k, h_k\in  C^1[a_k, b_k]$ and $g_k' \neq 0 \neq h_k'$ for  $k = 0,1,2,3$.  If $\mathfrak{f},{ \texttt f } \in AC^1(\overline{J_a^b}, \mathbb H)$   consider $q\in J_a^b$ such that   
 the mappings $x\mapsto {}^{\psi}\mathcal I_a^x [\mathfrak{f}](q,x, \vec{\alpha})$ and $ x\mapsto {}^{\psi} \mathcal I_a^x [{ \texttt f }](q,x, \vec{\beta} )$ belong to  $ C^1(\overline{J_a^b}, \mathbb H(\mathbb C))$. 
 Then 
{\begin{align*} &   \int_{\partial J_a^b} {}^{\psi}\mathcal I_{a,{\bf h} }^x  [ { \texttt f }](q, x, \vec{\beta}) \sigma^{{\psi} }_x 
{}^{\psi}\mathcal I_{a,{\bf g} }^x [\mathfrak{f}] (q,x,\vec{\alpha})\\ 
  = &       \int_{J_a^b }  \left( {}^{\psi} \mathcal I_{a,{\bf h} }^x [ { \texttt f }](q, x,\vec{\beta}) \ {}^{\psi}\mathfrak C_{a,{\bf g} }^{\vec{\alpha}}[\mathfrak{f}](q, x) + \  {}^{\psi}\mathfrak C_{r,a,{\bf h} }^{\vec{\beta}}[{ \texttt f }](q, x) {}^{\psi}\mathcal I_{a,{\bf g}}^x [\mathfrak{f}](q, x,\vec{\alpha})\right)dx \\ 
  & +    
   \int_{J_a^b }  \left( {}^{\psi} \mathcal I_{a,{\bf h} }^x [{ \texttt f }](q, x,\vec{\beta}) \ {}^{\psi}\mathfrak D_{a,{\bf g} }^{\vec{\alpha}}[\mathfrak{f}](q, x) + \  {}^{\psi}\mathfrak D_{r,a,{\bf h} }^{\vec{\beta}}[{ \texttt f }](q, x) {}^{\psi}\mathcal I_{a,{\bf g}}^x [\mathfrak{f}](q, x,\vec{\alpha})\right)dx .
\end{align*} }
 \end{proposition}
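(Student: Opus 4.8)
The plan is to reduce the statement to the classical Stokes formula \eqref{StokesHyp} for the $\psi$-Fueter operator, applied not to $\mathfrak{f}$ and ${ \texttt f }$ themselves but to their fractionally integrated counterparts, and then to rewrite the resulting $\psi$-Fueter derivatives through Proposition \ref{identities}. Concretely, I would first abbreviate $F(x) := {}^{\psi}\mathcal I_{a,{\bf g} }^x [\mathfrak{f}] (q,x,\vec{\alpha})$ and $G(x) := {}^{\psi}\mathcal I_{a,{\bf h} }^x [{ \texttt f }](q, x, \vec{\beta})$, treating $q$ as a fixed parameter so that $F$ and $G$ are functions of the variable $x$ alone. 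By the standing hypothesis on $q$, both $F$ and $G$ belong to $C^1(\overline{J_a^b}, \mathbb H(\mathbb C))$, which is precisely the regularity required to invoke a Stokes-type identity.

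The first key step is to note that formula \eqref{StokesHyp}, stated for $\mathbb H$-valued $C^1$ functions, extends verbatim to $\mathbb H(\mathbb C)$-valued ones. Indeed, the complex unit $\textsf{i}$ commutes with the quaternionic units $\psi_k$, with the exterior derivative, with the wedge products and with the real integration, so that both sides of \eqref{StokesHyp} are $\mathbb C$-bilinear in the argument pair. Writing $F = F_1 + \textsf{i}\, F_2$ and $G = G_1 + \textsf{i}\, G_2$ with $F_1,F_2,G_1,G_2$ being $\mathbb H$-valued, and applying \eqref{StokesHyp} to each real-quaternionic pair, yields the identity for the complexified functions $F$ and $G$. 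Applying this extended Stokes formula with ${ \texttt f }$ replaced by $G$ and $\mathfrak{f}$ replaced by $F$ gives
\begin{align*}
\int_{\partial J_a^b} G\, \sigma^{\psi}_x F = \int_{J_a^b}\left( G\, {}^{\psi}\mathcal D_x[F] + {}^{\psi}\mathcal D_{r,x}[G]\, F \right) dx .
\end{align*}

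The second step is purely algebraic. I would substitute the two identities from Proposition \ref{identities}(1), namely ${}^{\psi}\mathcal D_x[F] = {}^{\psi}\mathfrak C_{a,{\bf g} }^{\vec{\alpha}}[\mathfrak{f}](q, x) + {}^{\psi}\mathfrak D_{a,{\bf g} }^{\vec{\alpha}}[\mathfrak{f}](q, x)$ and, after replacing $({\bf g},\vec{\alpha})$ by $({\bf h},\vec{\beta})$ in the right-handed version, ${}^{\psi}\mathcal D_{r,x}[G] = {}^{\psi}\mathfrak C_{r,a,{\bf h} }^{\vec{\beta}}[{ \texttt f }](q, x) + {}^{\psi}\mathfrak D_{r,a,{\bf h} }^{\vec{\beta}}[{ \texttt f }](q, x)$; since these identities hold for any admissible vector-valued function, the substitution is legitimate. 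Distributing $G$ on the left of the first bracket and $F$ on the right of the second, and then separating the $\mathfrak C$-contributions from the $\mathfrak D$-contributions, reproduces exactly the two volume integrals in the claimed formula.

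The only point demanding care, rather than a genuine obstacle, is the bookkeeping of the non-commutative left/right structure: one must keep $G$ as a left factor and $F$ as a right factor throughout, so that the left derivative ${}^{\psi}\mathcal D_x$ acts on $F$ while the right derivative ${}^{\psi}\mathcal D_{r,x}$ acts on $G$, exactly as dictated by \eqref{StokesHyp}. The complexification argument in the first step is the place where one should be fully explicit, since the fractional integrals are genuinely $\mathbb H(\mathbb C)$-valued whereas the cited Stokes formula is stated only over $\mathbb H$; the remainder is a direct combination of \eqref{StokesHyp} with Proposition \ref{identities}.
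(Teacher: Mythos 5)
Your proposal is correct and follows exactly the paper's own (very terse) argument: apply the Stokes formula \eqref{StokesHyp} to the fractional integrals ${}^{\psi}\mathcal I_{a,{\bf h}}^x[{\texttt f}]$ and ${}^{\psi}\mathcal I_{a,{\bf g}}^x[\mathfrak f]$ and then substitute the decompositions from Proposition \ref{identities}(1). Your explicit justification of the extension of \eqref{StokesHyp} to $\mathbb H(\mathbb C)$-valued functions is a point the paper passes over in silence, but it does not change the route.
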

\begin{proof}
Considering in \eqref{StokesHyp} the functions ${}^{\psi}\mathcal I_{a,{\bf h} }^x [ { \texttt f }](q, x, \vec{\beta})$ and ${}^{\psi}\mathcal I_{a,{\bf g}}^x [\mathfrak{f}] (q,x,\vec{\alpha})$ and use Proposition \ref{identities}.
\end{proof}

\begin{corollary}(A version of the Cauchy theorem)  
Under the hypothesis of Proposition \ref{Stokes}, if moreover 
{$${}^{\psi}\mathfrak D_{a,{\bf g} }^{\vec{\alpha}}[\mathfrak{f}](q, \cdot ) =  {}^{\psi}\mathfrak D_{r,a,{\bf h} }^{\vec{\beta}}[{ \texttt f }](q, \cdot) =0, \quad \textrm{on} \quad J_a^b.$$ }
Then  
 \begin{align*} &   \int_{\partial J_a^b} {}^{\psi}\mathcal I_{a,{\bf h} }^x  [{ \texttt f }](q, x, \vec{\beta}) \sigma^{{\psi} }_x 
{}^{\psi}\mathcal I_{a,{\bf g} }^x [\mathfrak{f}] (q,x,\vec{\alpha}) \\
= &   \int_{J_a^b }  ( {}^{\psi} \mathcal I_{a,{\bf h} }^x [{ \texttt f }](q, x,\vec{\beta}) \ {}^{\psi}\mathfrak C_{a,{\bf g} }^{\vec{\alpha}}[\mathfrak{f}](q, x)  + \  {}^{\psi}\mathfrak C_{r,a,{\bf h}}^{\vec{\beta}}[{ \texttt f }](q, x) {}^{\psi}\mathcal I_{a,{\bf g} }^x [\mathfrak{f}](q, x,\vec{\alpha}) ) dx .
\end{align*}
\end{corollary}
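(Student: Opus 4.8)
The plan is to obtain this Cauchy-type statement as an immediate specialization of the Stokes type integral formula in Proposition \ref{Stokes}. First I would note that the hypotheses of Proposition \ref{Stokes} are assumed verbatim, so its conclusion holds: the boundary integral $\int_{\partial J_a^b} {}^{\psi}\mathcal I_{a,{\bf h}}^x[{\texttt f}]\, \sigma_x^{\psi}\, {}^{\psi}\mathcal I_{a,{\bf g}}^x[\mathfrak{f}]$ equals the sum of two volume integrals over $J_a^b$, the first carrying the correction operators ${}^{\psi}\mathfrak C_{a,{\bf g}}^{\vec{\alpha}}$ and ${}^{\psi}\mathfrak C_{r,a,{\bf h}}^{\vec{\beta}}$, and the second carrying the fractional Fueter operators ${}^{\psi}\mathfrak D_{a,{\bf g}}^{\vec{\alpha}}$ and ${}^{\psi}\mathfrak D_{r,a,{\bf h}}^{\vec{\beta}}$.

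The decisive step is to discard the second volume integral. Its integrand is
$$ {}^{\psi}\mathcal I_{a,{\bf h}}^x[{\texttt f}](q,x,\vec{\beta})\, {}^{\psi}\mathfrak D_{a,{\bf g}}^{\vec{\alpha}}[\mathfrak{f}](q,x) + {}^{\psi}\mathfrak D_{r,a,{\bf h}}^{\vec{\beta}}[{\texttt f}](q,x)\, {}^{\psi}\mathcal I_{a,{\bf g}}^x[\mathfrak{f}](q,x,\vec{\alpha}), $$
and the additional hypothesis forces both factors ${}^{\psi}\mathfrak D_{a,{\bf g}}^{\vec{\alpha}}[\mathfrak{f}](q,\cdot)$ and ${}^{\psi}\mathfrak D_{r,a,{\bf h}}^{\vec{\beta}}[{\texttt f}](q,\cdot)$ to vanish identically on $J_a^b$. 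Here it is worth emphasizing that, because $\mathbb H$-multiplication is non-commutative, the two summands cannot be merged; the point is rather that each summand vanishes separately, the first because ${}^{\psi}\mathfrak D_{a,{\bf g}}^{\vec{\alpha}}[\mathfrak{f}]=0$ stands as its right factor and the second because ${}^{\psi}\mathfrak D_{r,a,{\bf h}}^{\vec{\beta}}[{\texttt f}]=0$ stands as its left factor. Consequently the entire second integral is zero, and what survives is precisely the first volume integral, which is the asserted identity.

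I do not anticipate a genuine obstacle here: the content is carried entirely by Proposition \ref{Stokes}, and the corollary is a pointwise-in-$q$ consequence of setting the two fractional Fueter operators equal to zero. The only care needed is bookkeeping, namely checking that the regularity assumptions of Proposition \ref{Stokes}, in particular that $x\mapsto {}^{\psi}\mathcal I_a^x[\mathfrak{f}](q,x,\vec{\alpha})$ and $x\mapsto {}^{\psi}\mathcal I_a^x[{\texttt f}](q,x,\vec{\beta})$ lie in $C^1(\overline{J_a^b},\mathbb H(\mathbb C))$, are inherited unchanged, so that the Stokes formula may be invoked legitimately before the vanishing hypothesis is imposed.
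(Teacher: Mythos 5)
Your proof is correct and matches the paper's intent exactly: the corollary is an immediate specialization of Proposition \ref{Stokes}, with the second volume integral vanishing term by term because each summand contains one of the identically zero operators ${}^{\psi}\mathfrak D_{a,{\bf g}}^{\vec{\alpha}}[\mathfrak{f}](q,\cdot)$ or ${}^{\psi}\mathfrak D_{r,a,{\bf h}}^{\vec{\beta}}[{\texttt f}](q,\cdot)$ as a factor. The paper offers no separate proof precisely because this is the whole argument.
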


\begin{observation} 
The Stokes type integral formula induced by ${}^{\psi}\mathfrak D_{a,{{\bf ln}}}^{\vec{\alpha}}$ associated to Hadamard fractional derivative holds. 
Set $0<a_k<b_k$ for $k=0,1,2,3$ and let $\mathfrak{f},\texttt{f} \in AC^1(\overline{J_a^b}, \mathbb H)$. Consider $q\in J_a^b$ such that   
 the mappings $x\mapsto {}^{\psi}\mathcal I_a^x [\mathfrak{f}](q,x, \vec{\alpha})$ and $ x\mapsto {}^{\psi} \mathcal I_a^x [\texttt{f}](q,x, \vec{\beta} )$ belong to  $ C^1(\overline{J_a^b}, \mathbb H(\mathbb C))$. Then 
 \begin{align*} &   \int_{\partial J_a^b} {}^{\psi}\mathcal I_{a,{\bf ln} }^x  [\texttt{f}](q, x, \vec{\beta}) \sigma^{{\psi} }_x 
{}^{\psi}\mathcal I_{a,{\bf ln} }^x [\mathfrak{f}] (q,x,\vec{\alpha})\\ 
 & -     \int_{J_a^b }  \left( {}^{\psi} \mathcal I_{a,{\bf ln} }^x [\texttt{f}](q, x,\vec{\beta}) 
\ {}^{\psi}\mathfrak C_{a,{\bf ln} }^{\vec{\alpha}}[\mathfrak{f}](q, x) + \  {}^{\psi}\mathfrak C_{r,a,{\bf ln} }^{\vec{\beta}}[\texttt{f}](q, x) {}^{\psi}\mathcal I_{a,{\bf ln} }^x [\mathfrak{f}](q, x,\vec{\alpha})\right)dx \\ 
= &   
   \int_{J_a^b }  \left( {}^{\psi} \mathcal I_{a,{\bf ln}  }^x [\texttt{f}](q, x,\vec{\beta}) \ {}^{\psi}\mathfrak D_{a,{\bf ln} }^{\vec{\alpha}}[\mathfrak{f}](q, x) + \  {}^{\psi}\mathfrak D_{r,a,{\bf ln} }^{\vec{\beta}}[\texttt{f}](q, x) {}^{\psi}\mathcal I_{a,{\bf ln} 
	}^x [\mathfrak{f}](q, x,\vec{\alpha})\right)dx 
\end{align*}
and if 
{ $${}^{\psi}\mathfrak D_{a,{\bf ln} }^{\vec{\alpha}}[\mathfrak{f}](q,\cdot ) =  {}^{\psi}\mathfrak D_{r,a,{\bf ln} }^{\vec{\beta}}[\texttt{f}](q, \cdot) =0, \quad \textrm{on} \quad J_a^b,$$ }
then  
 \begin{align*}    \int_{\partial J_a^b} {}^{\psi}\mathcal I_{a,{\bf ln} }^x  [\texttt{f}](q, x, \vec{\beta}) \sigma^{{\psi} }_x 
{}^{\psi}\mathcal I_{a,{\bf ln} }^x [\mathfrak{f}] (q,x,\vec{\alpha})= &   \int_{J_a^b }  ( {}^{\psi} \mathcal I_{a,{\bf ln} }^x [\texttt{f}](q, x,\vec{\beta}) \ {}^{\psi}\mathfrak C_{a,{\bf ln} }^{\vec{\alpha}}[\texttt{f}](q, x)  \\
 &  + \  {}^{\psi}\mathfrak C_{r,a,{\bf ln} }^{\vec{\beta}}[\texttt{f}](q, x) {}^{\psi}\mathcal I_{a,{\bf ln} }^x [\mathfrak{f}](q, x,\vec{\alpha}) )dx .
\end{align*}
\end{observation}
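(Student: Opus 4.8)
The plan is to derive this statement as a direct specialization of Proposition~\ref{Stokes} and its Corollary, taking both vector-valued functions equal to ${\bf ln}$. First I would verify that ${\bf ln}=(\ln x_0, \ln x_1, \ln x_2, \ln x_3)$ is an admissible choice for ${\bf g}$ and ${\bf h}$: because we now assume $0<a_k<b_k$ for each $k$, the component $g_k(x)=\ln x$ is of class $C^1$ on $[a_k, b_k]$, strictly increasing, and satisfies $g_k'(x)=1/x>0$, so in particular $g_k'\neq 0$ throughout, exactly as required. This is the sole reason the positivity hypothesis $0<a_k$ is imposed. The regularity conditions that $x\mapsto {}^{\psi}\mathcal I_{a,{\bf ln}}^x[\mathfrak{f}](q,x,\vec{\alpha})$ and $x\mapsto {}^{\psi}\mathcal I_{a,{\bf ln}}^x[\texttt{f}](q,x,\vec{\beta})$ lie in $C^1(\overline{J_a^b}, \mathbb H(\mathbb C))$ are assumed in the statement and carried over unchanged.

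Next I would apply Proposition~\ref{Stokes} with ${\bf g}={\bf h}={\bf ln}$. Its conclusion, after transposing the integral that carries ${}^{\psi}\mathfrak C_{a,{\bf ln}}^{\vec{\alpha}}$ and ${}^{\psi}\mathfrak C_{r,a,{\bf ln}}^{\vec{\beta}}$ to the left-hand side, is precisely the first displayed identity of the statement. The explicit ${\bf ln}$-forms of the operators involved are obtained by substituting $g_j'(y_j)=1/y_j$ and $g_j(x_j)=\ln x_j$ into the defining integrals, and they coincide with the formulas already recorded in the preceding Observation. In particular, the prefactor $(g_j'(x_j)-1)\,\dfrac{1}{g_j'(x_j)}$ appearing in ${}^{\psi}\mathfrak C_{a,{\bf g}}^{\vec{\alpha}}$ collapses to $(1-x_j)$, which explains the factor $(1-x_j)$ seen in the Hadamard case.

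Finally, the Cauchy-type conclusion follows by the same substitution in the Corollary: under the vanishing hypotheses ${}^{\psi}\mathfrak D_{a,{\bf ln}}^{\vec{\alpha}}[\mathfrak{f}](q,\cdot)={}^{\psi}\mathfrak D_{r,a,{\bf ln}}^{\vec{\beta}}[\texttt{f}](q,\cdot)=0$ on $J_a^b$, the integral carrying the ${}^{\psi}\mathfrak D$-operators on the right-hand side drops out, leaving the boundary integral equal to the ${}^{\psi}\mathfrak C$-integral. Since the entire argument is pure specialization, I expect no genuine obstacle; the only point warranting a word of care is the verification that ${\bf ln}$ meets the differentiability and monotonicity requirements of Proposition~\ref{Stokes}, which, as noted, is guaranteed precisely by the assumption $a_k>0$.
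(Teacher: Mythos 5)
Your proposal is correct and matches the paper's (implicit) justification exactly: the Observation is stated without proof precisely because it is the specialization ${\bf g}={\bf h}={\bf ln}$ of Proposition~\ref{Stokes} and its Corollary, with the hypothesis $0<a_k$ serving only to make $\ln$ an admissible $C^1$, strictly increasing function with nonvanishing derivative on $[a_k,b_k]$. Your computation of the prefactor $(g_j'(x_j)-1)/g_j'(x_j)=1-x_j$ also agrees with the explicit Hadamard-case formulas recorded in the paper's earlier Observation.
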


\begin{theorem}\label{B-P-F-D} (Borel-Pompieu type formula induced by ${}^{\psi}\mathfrak D_{a,{\bf g} }^{\vec{\alpha}}$ and ${}^{\psi}\mathfrak D_{r,a, {\bf h} }^{\vec{\beta}}$) 
Let ${\bf g} = (g_0,g_1,g_2,g_3)$ and ${\bf h}= (h_0,h_1,h_2,h_3)$ be two vector-valued functions with monotonously increasing components such that $g_k, h_k\in  C^1[a_k, b_k]$  and  $ g_k' \neq 0 \neq h_k'$ for  $k = 0,1,2,3$. If $\mathfrak{f},\texttt{f} \in AC^1(\overline{J_a^b}, \mathbb H)$,    consider $q\in J_a^b$ such that the mappings $x\mapsto {}^{\psi}\mathcal I_a^x [\mathfrak{f}](q,x, \vec{\alpha})$ and $ x\mapsto {}^{\psi} \mathcal I_a^x [\texttt{f}](q,x, \vec{\beta} )$ belong to  $ C^1(\overline{J_a^b}, \mathbb H(\mathbb C))$. Then 
 \begin{align*}   &  \  \  \ \  \int_{\partial J_a^b}  \left(  {}^{\psi}\mathfrak  K _{a,{\bf g} }^{\vec{\alpha}}  (q, x,  y ) 
\sigma_{ y }^{\psi}   {}^{\psi}\mathcal I_{a,{\bf g}}^{ y } [\mathfrak{f}](q, y , \vec{\alpha})  
  +       {}^{\psi}\mathcal I_{a,{\bf h}  }^{ y } [\texttt{f}](q, y , \vec{\beta })
 \sigma_{ y }^{\psi}        {}^{\psi}\mathfrak  K _{a,{\bf h}  }^{\vec{\beta }}  (q, x,  y ) 
\right)\\
     & -  
\int_{J_a^b} \left(  
{}^{\psi}\mathfrak  K _{a,{\bf g} }^{\vec{\alpha}}  (q, x,  y )   {}^{\psi}\mathfrak C_{a,{\bf g} }^{\vec{\alpha}}[\mathfrak{f}](q, y)  
  +    
{}^{\psi}\mathfrak C_{r,a,{\bf h} }^{\vec{\beta }}[\texttt{f}](q, y) 	{}^{\psi}\mathfrak  K _{a,{\bf h} }^{\vec{\beta }}  (q, x,  y )     
	\right  )  dy     \\
& -   
\int_{J_a^b} \left(  {}^{\psi}\mathfrak  K _{a,{\bf g} }^{\vec{\alpha}}  (q, x,  y ) {}^{\psi}\mathfrak D_{a,{\bf g} }^{\vec{\alpha}}[\mathfrak{f}](q, y)  
    +  {}^{\psi}\mathfrak D_{r,a,{\bf h} }^{\vec{\beta}}[\texttt{f}](q, y) {}^{\psi}\mathfrak  K _{a,{\bf h}}^{\vec{\beta}}  (q, x,  y )   
	\right)
	\\
		=  &      \left\{ \begin{array}{ll} 
		\displaystyle \sum_{j= 0}^3   (\mathfrak{f}+\texttt{f}) (q_0, \dots, x_k , \dots, q_3)  +   M_{a,g}^{\vec \alpha}[\mathfrak{f}](q,x) + M_{a,h}^{\vec \beta}[\texttt{f}](q,x)  
		, &     x\in 
		J_a^b ,  \\ 0 , &  x\in \mathbb H\setminus\overline{J_a^b},                   
\end{array} \right. 
\end{align*}
where 
$${}^{\psi}\mathfrak  K _{a,{\bf g} }^{\vec{\alpha}}  (q, x,  y )  = {}^{\psi}\mathfrak P_{a,{\bf g} }^{\vec{\alpha}} [ K_{\psi}( y -x) ] (q, x)$$
and  
$$M_{a,{\bf g}}^{\vec \alpha}[\mathfrak{f}](q,x) = \sum_{{   { \begin{array}{c}j, k=0 \\ j\neq k  \end{array}} }}^3 
 \frac{( {\bf I}_{a_k ^+,g_k }^{    { 1- \alpha_{k }  } } \mathfrak{f} )(q_0, \dots, x_k , \dots, q_3)}{ \Gamma(1-\alpha_j)   (   g_j(x_j) - g_j(a) )^{1-\alpha_j}}.$$
\end{theorem}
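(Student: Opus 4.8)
The plan is to derive this fractional Borel--Pompeiu formula by feeding the classical Borel--Pompeiu identity \eqref{BorelHyp} the right pair of functions and then rewriting every term using the operator identities already established in Proposition \ref{identities}. Concretely, I would apply \eqref{BorelHyp} to the $\mathbb H(\mathbb C)$-valued functions $\mathfrak{F}(q,\cdot) := {}^{\psi}\mathcal I_{a,{\bf g}}^{x}[\mathfrak{f}](q,x,\vec{\alpha})$ and $\texttt{F}(q,\cdot):= {}^{\psi}\mathcal I_{a,{\bf h}}^{x}[\texttt{f}](q,x,\vec{\beta})$, which by hypothesis lie in $C^1(\overline{J_a^b},\mathbb H(\mathbb C))$, with the domain $\Omega = J_a^b$ and the standard Cauchy kernel $K_\psi(y-x)$. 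Since $\partial J_a^b$ is piecewise smooth, the formula applies after the usual approximation argument (or one simply notes that the Stokes/Borel--Pompeiu machinery extends to the box $J_a^b$ exactly as recorded in the preliminaries). This produces an identity whose left side is a boundary integral of $K_\psi(y-x)\sigma_y^\psi \mathfrak{F}(q,y) + \texttt{F}(q,y)\sigma_y^\psi K_\psi(y-x)$ minus a solid integral of $K_\psi(y-x)\,{}^{\psi}\mathcal D_y[\mathfrak{F}](q,y) + {}^{\psi}\mathcal D_{r,y}[\texttt{F}](q,y)\,K_\psi(y-x)$, and whose right side is $\mathfrak{F}(q,x)+\texttt{F}(q,x)$ for $x\in J_a^b$ and $0$ outside.

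Next I would transform each group of terms. For the solid integrals: by Proposition \ref{identities}(1), $ {}^{\psi}\mathcal D_y\circ {}^{\psi}\mathcal I_{a,{\bf g}}^{y}[\mathfrak{f}] = {}^{\psi}\mathfrak C_{a,{\bf g}}^{\vec\alpha}[\mathfrak{f}] + {}^{\psi}\mathfrak D_{a,{\bf g}}^{\vec\alpha}[\mathfrak{f}]$ and likewise on the right for $\texttt{f}$ with ${\bf h},\vec\beta$; this splits the single volume integral of \eqref{BorelHyp} into the two volume integrals appearing in the statement (the $\mathfrak C$-integral and the $\mathfrak D$-integral), with $K_\psi(y-x)$ sitting on the correct side in each summand. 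The boundary term is already in the desired shape once one introduces the kernel notation ${}^{\psi}\mathfrak K_{a,{\bf g}}^{\vec\alpha}(q,x,y)$; here I would need to check that moving the operator onto the kernel is legitimate, i.e. that $\int_{\partial J_a^b} {}^{\psi}\mathfrak P_{a,{\bf g}}^{\vec\alpha}[K_\psi(\cdot-x)](q,y)\,\sigma_y^\psi\,(\cdots)$ matches the raw $K_\psi(y-x)\sigma_y^\psi(\cdots)$ term after the identity is applied — in fact the kernel name is just a definition, ${}^{\psi}\mathfrak K_{a,{\bf g}}^{\vec\alpha}(q,x,y)={}^{\psi}\mathfrak P_{a,{\bf g}}^{\vec\alpha}[K_\psi(y-x)](q,x)$, so this step is bookkeeping once one recognizes that the ``$K_\psi$'' appearing against $\sigma_y^\psi$ in \eqref{BorelHyp} should be read through $\mathfrak P$; the cleaner route is to apply $\mathfrak P$ to the \emph{whole} identity \eqref{BorelHyp} at the end rather than at the start.

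That last observation is really the crux, so let me reorganize: the cleanest plan is to apply \eqref{BorelHyp} to $\mathfrak{F},\texttt{F}$ as above to get the ``intermediate'' Borel--Pompeiu identity, and then apply the operator ${}^{\psi}\mathfrak P_{a,{\bf g}}^{\vec\alpha}$ (resp. ${}^{\psi}\mathfrak P_{r,a,{\bf h}}^{\vec\beta}$) in the variable $x$ to the respective pieces. On the left this turns $K_\psi(y-x)$ into ${}^{\psi}\mathfrak K_{a,{\bf g}}^{\vec\alpha}(q,x,y)$ by definition, and one must justify differentiating under the boundary and volume integral signs — permissible because $K_\psi(y-x)$ is smooth for $x\notin\partial J_a^b$ and the fractional integral kernels $g_j'/(g_j-g_j)^{1-\alpha_j}$ are integrable. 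On the right, for $x\in J_a^b$, ${}^{\psi}\mathfrak P_{a,{\bf g}}^{\vec\alpha}$ hits $\mathfrak{F}(q,x) = {}^{\psi}\mathcal I_{a,{\bf g}}^{x}[\mathfrak{f}](q,x,\vec\alpha)$, and here Proposition \ref{identities}(2) delivers exactly $\sum_{j}\mathfrak{f}(q_0,\dots,x_k,\dots,q_3) + M_{a,{\bf g}}^{\vec\alpha}[\mathfrak{f}](q,x)$; the analogous computation for $\texttt{f}$ gives the $\texttt{f}$ and $M_{a,{\bf h}}^{\vec\beta}[\texttt{f}]$ terms, while for $x\in\mathbb H\setminus\overline{J_a^b}$ the right side is $0$ and stays $0$ under $\mathfrak P$. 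I expect the main obstacle to be purely technical: verifying the interchange of ${}^{\psi}\mathfrak P_{a,{\bf g}}^{\vec\alpha}$ (which contains a singular Riemann--Liouville integral followed by $\partial_{x_j}/g_j'$) with the boundary and volume integrals over $y$ — i.e. a Fubini/differentiation-under-the-integral argument near the weak singularities of both $K_\psi$ and the fractional kernel — together with the careful placement of the noncommuting quaternionic factors on the correct side at every step. Everything else is a substitution of Proposition \ref{identities} into \eqref{BorelHyp}.
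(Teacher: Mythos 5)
Your proposal matches the paper's proof essentially step for step: apply \eqref{BorelHyp} to ${}^{\psi}\mathcal I_{a,{\bf g}}^{x}[\mathfrak{f}]$ and ${}^{\psi}\mathcal I_{a,{\bf h}}^{x}[\texttt{f}]$, split the volume integral via Proposition \ref{identities}(1), then apply ${}^{\psi}\mathfrak P_{a,{\bf g}}^{\vec{\alpha}}$ (resp.\ the ${\bf h},\vec{\beta}$ version) and invoke Proposition \ref{identities}(2) together with a Leibniz/differentiation-under-the-integral argument; the paper makes your ``apply to the respective pieces'' precise by first setting $\texttt{f}=0$, then $\mathfrak{f}=0$, and adding the two resulting identities. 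The approach and all key ingredients coincide.
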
 

 \begin{proof} 
Application of formula \eqref{BorelHyp} with $\mathfrak{f}$ and $\texttt{f}$ replaced by ${}^{\psi}\mathcal I_{a,{\bf g} } ^x [\mathfrak{f}](q,x,\vec{\alpha})$ and ${}^{\psi}\mathcal I_{a,{\bf h} }^x [\texttt{f}](q,x, \vec{\beta})$ enables us to write 
 \begin{align}\label{ecua1}  &  \int_{\partial J_a^b} (K_{\psi}( y -x)\sigma_{ y }^{\psi}  {}^{\psi}\mathcal I_{a,{\bf g} }^{ y } [\mathfrak{f}](q, y , \vec{\alpha}) + {}^{\psi} \mathcal I_{a,{\bf h} }^{ y } [\texttt{f}](q, y ,\vec{\beta}) \sigma_{ y }^{\psi} K_{\psi}( y -x) ) \nonumber  \\ 
&  -   
\int_{J_a^b} (K_{\psi} (y-x) {}^{\psi}\mathcal D_{y} {}^{\psi} \mathcal I_{a,{\bf g} }^{y} [\mathfrak{f}] (q,y, \vec{\alpha}) +	{}^{\psi}\mathcal D_{r,y} {}^{\psi}\mathcal I_{a,{\bf h} }^{y} [\texttt{f}](q,y, \vec{\beta}) K_{\psi} (y-x))dy   \nonumber \\
		=  &      \left\{ \begin{array}{ll} {}^{\psi} \mathcal I_{a,{\bf g} }^{x} [\mathfrak{f}](q,x,\vec{\alpha}) +  {}^{\psi}\mathcal I_{a,{\bf h} }^{x} [\texttt{f}](q,x, \vec{\beta})  , &     x\in 
		J_a^b ,  \\ 0 , &  x\in \mathbb H\setminus\overline{J_a^b}.                   
\end{array} \right. 
\end{align}  
From 1. in Proposition \ref{identities} we get 
 \begin{align*}   &  \int_{\partial J_a^b} (K_{\psi}( y -x)\sigma_{ y }^{\psi}  {}^{\psi}\mathcal I_{a,{\bf g} }^{ y } [\mathfrak{f}](q, y , \vec{\alpha}) + {}^{\psi} \mathcal I_{a,{\bf h} }^{ y } [\texttt{f}](q, y ,\vec{\beta}) \sigma_{ y }^{\psi} K_{\psi}( y -x) ) \nonumber  \\ 
&  -   
\int_{J_a^b} (K_{\psi} (y-x) {}^{\psi}\mathfrak C_{a,{\bf g} }^{\vec{\alpha}}[\mathfrak{f}](q, y)  
 +  {}^{\psi}\mathfrak C_{r,a,{\bf h} }^{\vec{\beta}}[\texttt{f}](q, y)   K_{\psi} (y-x))dy   \nonumber \\
& -   
\int_{J_a^b} (K_{\psi} (y-x)  {}^{\psi}\mathfrak D_{a,{\bf g} }^{\vec{\alpha}}[\mathfrak{f}](q, y)  
 +	  {}^{\psi}\mathfrak D_{r,a,{\bf h} }^{\vec{\beta}}[\texttt{f}](q, y)
   K_{\psi} (y-x))dy   \nonumber \\
		=  &      \left\{ \begin{array}{ll} {}^{\psi} \mathcal I_{a,{\bf g} }^{x} [\mathfrak{f}](q,x,\vec{\alpha}) + 
		{}^{\psi}\mathcal I_{a,{\bf h}}^{x} [\texttt{f}](q,x, \vec{\beta})  , &     x\in 
		J_a^b ,  \\ 0 , &  x\in \mathbb H\setminus\overline{J_a^b},                   
\end{array} \right. 
\end{align*}  
Suppose $\texttt{f}=0$, applying the operator ${}^{\psi}\mathfrak P_{a,{\bf g} }^{\vec{\alpha}}$ on both sides using Fact 3. of Proposition \ref{identities}, then Leibniz rule implies that 
 \begin{align*}   &  \int_{\partial J_a^b}   {}^{\psi}\mathfrak P_{a,{\bf g} }^{\vec{\alpha}} [ K_{\psi}( y -x) ]  
(q, x)
\sigma_{ y }^{\psi}   {}^{\psi}\mathcal I_{a,{\bf g} }^{ y } [\mathfrak{f}](q, y , \vec{\alpha})     -   
\int_{J_a^b} {}^{\psi}\mathfrak P_{a,{\bf g} }^{\vec{\alpha}} [ K_{\psi}( y -x) ]  
(q, x)   {}^{\psi}\mathfrak C_{a,{\bf g} }^{\vec{\alpha}}[\mathfrak{f}](q, y)  
  dy     \\
& -   
\int_{J_a^b} {}^{\psi}\mathfrak P_{a,{\bf g} }^{\vec{\alpha}} [ K_{\psi}( y -x) ]  
(q, x) {}^{\psi}\mathfrak D_{a,{\bf g} }^{\vec{\alpha}}[\mathfrak{f}](q, y)  
   \\
		=  &      \left\{ \begin{array}{ll} 
		\displaystyle \sum_{j= 0}^3   \mathfrak{f}  (q_0, \dots, x_k , \dots, q_3)  +  \sum_{   { \begin{array}{c}j, k=0 \\ j\neq k  \end{array}} }^3 
 \frac{( {\bf I}_{a_k ^+,g_k }^{    { 1- \alpha_{k }  } } \mathfrak{f} )(q_0, \dots, x_k , \dots, q_3)}{ \Gamma(1-\alpha_j)   (   g_j(x_j) - g_j(a) )^{1-\alpha_j}  }  
		, &     x\in 
		J_a^b ,  \\ 0 , &  x\in \mathbb H\setminus\overline{J_a^b}.                 
\end{array} \right. 
\end{align*}
Suppose now $\mathfrak{f}=0$ in \eqref{ecua1} and compute similarly as before for $\texttt{f}$. After addition of the two readily inferred relations, the theorem follows.
\end{proof}

\begin{observation} To provide an explicit representation of ${}^{\psi}\mathfrak  K _{a,{\bf g} }^{\vec{\alpha}}  (y, x, \tau)$ one can 
use a decomposition of the hyperholomorphic Cauchy kernel in terms of Gegenbauer polynomials given in \cite[page 93]{GS2} such that  
$${}^{\psi}\mathfrak  K _{a,{\bf g} }^{\vec{\alpha}}  (q, x, y) :=
		\frac{1}{2\pi^2} \sum_{k=0}^{\infty}  \frac{1}{|y|^{k+3}}	{}^{\psi}\mathfrak P_{a,g}^{\vec{\alpha}} 
		\left[  |x|^k A_{4,k} (x,y) \right] (q, x) ,$$
		with
		$$2A_{4,k} (x,y):= [ (k+1)  C_{k+1}^{1} (s) +  (2-n) C_{k}^{2} (s) \omega_y \wedge \omega_x]  \bar \omega_x, $$
where  $C_{k+1}^{1} $ and $ C_{k}^{2}$ are the Gegenbauer polynomials, $x= |x|\omega_x$, $y= |y|\omega_y$ and   $s=(\omega_x, \omega_y)$. 
\end{observation}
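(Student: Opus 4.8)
The plan is to obtain the series by feeding the known Gegenbauer expansion of the Cauchy kernel into the defining relation ${}^{\psi}\mathfrak K_{a,{\bf g}}^{\vec{\alpha}}(q,x,y) = {}^{\psi}\mathfrak P_{a,{\bf g}}^{\vec{\alpha}}[K_{\psi}(y-x)](q,x)$ from Theorem \ref{B-P-F-D}, and then moving the operator ${}^{\psi}\mathfrak P_{a,{\bf g}}^{\vec{\alpha}}$ past the infinite sum. First I would record the expansion from \cite{GS2}. Writing $x=|x|\omega_x$, $y=|y|\omega_y$, $s=\langle\omega_x,\omega_y\rangle$ and $t=|x|/|y|$, the identity $|y-x|^2 = |y|^2(1-2st+t^2)$ together with the Gegenbauer generating function $(1-2st+t^2)^{-\lambda}=\sum_{k\ge 0}C_k^{\lambda}(s)t^k$ (valid for $|t|<1$, $|s|\le 1$) expands the Newton kernel, and a single application of the conjugate Cauchy--Riemann operator produces the hyperholomorphic kernel in the stated form: for $|x|<|y|$,
$$K_{\psi}(y-x)=\frac{1}{2\pi^2}\frac{\overline{y_{\psi}-x_{\psi}}}{|y_{\psi}-x_{\psi}|^4}=\frac{1}{2\pi^2}\sum_{k=0}^{\infty}\frac{|x|^k}{|y|^{k+3}}A_{4,k}(x,y),$$
the series converging absolutely and uniformly on every ball $\{|x|\le r\}$ with $r<|y|$. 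Here each $|x|^k A_{4,k}(x,y)$ is, as a function of $x$, an inner left spherical monogenic of degree $k$, i.e.\ a homogeneous polynomial of degree $k$ whose coefficients depend on $\omega_y$ and $|y|$.

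Second, since ${}^{\psi}\mathfrak P_{a,{\bf g}}^{\vec{\alpha}}$ acts only on the $x$-variable (with $q$ frozen) and is the finite sum over $j$ of the one-dimensional operators $D_{a_j^+,g_j}^{1-\alpha_j}=\frac{1}{g_j'}\partial_{x_j}\circ{\bf I}_{a_j^+,g_j}^{\alpha_j}$, it suffices to interchange each $D_{a_j^+,g_j}^{1-\alpha_j}$ with the sum. Uniform convergence of the Gegenbauer series in the slice variable $y_j$ over the compact interval $[a_j,x_j]$ lets the fractional integral ${\bf I}_{a_j^+,g_j}^{\alpha_j}$ pass inside the sum (dominated convergence against the integrable weight $(g_j(x_j)-g_j(y_j))^{\alpha_j-1}g_j'(y_j)$), and the classical theorem on termwise differentiation then lets $\frac{1}{g_j'}\partial_{x_j}$ pass inside, provided the differentiated series still converges uniformly. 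Granting this, applying ${}^{\psi}\mathfrak P_{a,{\bf g}}^{\vec{\alpha}}$ term by term yields exactly
$${}^{\psi}\mathfrak K_{a,{\bf g}}^{\vec{\alpha}}(q,x,y)=\frac{1}{2\pi^2}\sum_{k=0}^{\infty}\frac{1}{|y|^{k+3}}\,{}^{\psi}\mathfrak P_{a,{\bf g}}^{\vec{\alpha}}\big[|x|^k A_{4,k}(x,y)\big](q,x),$$
which is the asserted representation.

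The main obstacle is precisely the uniform convergence of the series \emph{after} fractional integro-differentiation, and this is where the spherical-monogenic structure is essential. Because $x\mapsto|x|^k A_{4,k}(x,y)$ is a polynomial of degree $k$ in the real coordinates, its RL fractional derivative $D_{a_j^+,g_j}^{1-\alpha_j}$ in $x_j$ is again a combination of fractional derivatives of powers; using the model estimate $\big|D^{\beta}x_j^{m}\big|\sim\frac{\Gamma(m+1)}{\Gamma(m+1-\beta)}|x_j|^{m-\beta}$, whose Gamma ratio grows only polynomially in $m$, together with the Gegenbauer bound $|C_k^{\lambda}(s)|\le C_k^{\lambda}(1)=\binom{k+2\lambda-1}{k}=O(k^{2\lambda-1})$ for $|s|\le 1$, one obtains a uniform bound of the form $\big\|D_{a_j^+,g_j}^{1-\alpha_j}[|x|^k A_{4,k}(\cdot,y)]\big\|_{\infty,\{|x|\le r\}}\le P(k)\,(r/|y|)^k$ with $P$ a fixed polynomial. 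Since $r/|y|<1$, this is a summable dominating sequence, so the differentiated series converges uniformly on compacta of $\{|x|<|y|\}$, all the interchanges above are legitimate, and the formula follows.
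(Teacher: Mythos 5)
Your proposal is correct and follows what is implicitly the paper's own route: the paper offers no proof of this observation at all, merely citing the Gegenbauer decomposition of $K_{\psi}$ from \cite[page 93]{GS2} and applying ${}^{\psi}\mathfrak P_{a,{\bf g}}^{\vec{\alpha}}$ term by term, which is exactly your first two steps, so the convergence analysis you add is justification the paper leaves unsaid. One technical repair to that analysis: your bound $\bigl\|D_{a_j^+,g_j}^{1-\alpha_j}[\,|x|^k A_{4,k}(\cdot,y)\,]\bigr\|_{\infty,\{|x|\le r\}}\le P(k)(r/|y|)^k$ cannot hold up to the faces $x_j=a_j$, because the Riemann--Liouville derivative of a polynomial that does not vanish at $a_j$ blows up like $(g_j(x_j)-g_j(a_j))^{\alpha_j-1}$ there, as the paper's own computation of $D_{a_j^+,g_j}^{1-\alpha_j}[1]$ in Proposition \ref{identities} shows. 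Since that singular factor is independent of $k$ and multiplies coefficients still controlled by $P(k)(r/|y|)^k$, your dominating series, and hence the termwise interchange, remain valid on compact subsets of the open box $J_a^b$ intersected with $\{|x|<|y|\}$, which is all the argument requires.
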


\begin{corollary}\label{C_I_F} (Cauchy type formula induced by ${}^{\psi}\mathfrak D_{a,{\bf g} }^{\vec{\alpha}}$ and ${}^{\psi}\mathfrak D_{r,a, {\bf h} }^{\vec{\beta}}$) Suppose that ${\bf g} = (g_0,g_1,g_2,g_3)$ and ${\bf h} = (h_0,h_1,h_2,h_3)$ are two vector-valued functions with monotonously increasing components such that $g_k, h_k\in  C^1[a_k, b_k]$  and $g_k' \neq 0 \neq h_k'$ for  $k = 0,1,2,3$.  If $\mathfrak{f},\texttt{f} \in AC^1(\overline{J_a^b}, \mathbb H)$, consider $q\in J_a^b$ such that the mappings $x\mapsto {}^{\psi}\mathcal I_a^x [\mathfrak{f}](q,x, \vec{\alpha})$, \ $ x\mapsto {}^{\psi} \mathcal I_a^x [\texttt{f}](q,x, \vec{\beta} )$ belong to $ C^1(\overline{J_a^b}, \mathbb H(\mathbb C))$ and 
$${}^{\psi}\mathfrak D_{a,{\bf g} }^{\vec{\alpha}}[\mathfrak{f}](q, y) = {}^{\psi}\mathfrak D_{r,a,{\bf h} }^{\vec{\beta}}[\texttt{f}](q, y)=0.$$
Then 
 \begin{align*}   &  \int_{\partial J_a^b}  \left(  {}^{\psi}\mathfrak  K _{a,{\bf g} }^{\vec{\alpha}}  (q, x,  y ) 
\sigma_{ y }^{\psi}   {}^{\psi}\mathcal I_{a,{\bf g} }^{ y } [\mathfrak{f}](q, y , \vec{\alpha})  
  +       {}^{\psi}\mathcal I_{a,{\bf h}  }^{ y } [\texttt{f}](q, y , \vec{\beta })
 \sigma_{ y }^{\psi}        {}^{\psi}\mathfrak  K _{a,{\bf h}  }^{\vec{\beta }}  (q, x,  y ) 
\right)\\
   -   &  
\int_{J_a^b} \left(  
{}^{\psi}\mathfrak  K _{a,{\bf g} }^{\vec{\alpha}}  (q, x,  y )   {}^{\psi}\mathfrak C_{a,{\bf g} }^{\vec{\alpha}}[\mathfrak{f}](q, y)  
  +    
{}^{\psi}\mathfrak C_{r, a, { \bf h } }^{\vec{\beta }}[\texttt{f}](q, y) 	{}^{\psi}\mathfrak  K _{a,{\bf h}  }^{\vec{\beta }}  (q, x,  y )     
	\right  )  dy     \\
		=  &      \left\{ \begin{array}{ll} 
		\displaystyle \sum_{j= 0}^3   (\mathfrak{f}+\texttt{f}) (q_0, \dots, x_k , \dots, q_3)  +   M_{a,{\bf g} }^{\vec \alpha}[
		\mathfrak{f}](q,x) + M_{a,{\bf h} }^{\vec \beta}[\texttt{f}](q,x)  
		, &     x\in 
		J_a^b ,  \\ 0 , &  x\in \mathbb H\setminus\overline{J_a^b},                   
\end{array} \right. 
\end{align*}
\end{corollary}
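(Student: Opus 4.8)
The plan is to obtain this statement as an immediate specialization of Theorem~\ref{B-P-F-D}. First I would check that the standing hypotheses of the corollary are word-for-word those of the theorem: ${\bf g}=(g_0,g_1,g_2,g_3)$ and ${\bf h}=(h_0,h_1,h_2,h_3)$ have monotonously increasing $C^1[a_k,b_k]$ components with $g_k'\neq 0\neq h_k'$, $\mathfrak{f},\texttt{f}\in AC^1(\overline{J_a^b},\mathbb H)$, and $q\in J_a^b$ is chosen so that $x\mapsto {}^{\psi}\mathcal I_a^x[\mathfrak{f}](q,x,\vec{\alpha})$ and $x\mapsto {}^{\psi}\mathcal I_a^x[\texttt{f}](q,x,\vec{\beta})$ belong to $C^1(\overline{J_a^b},\mathbb H(\mathbb C))$. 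Since these coincide, Theorem~\ref{B-P-F-D} applies directly and furnishes the full three-line identity: the boundary integral against $\sigma_y^\psi$, the volume integral involving ${}^{\psi}\mathfrak C_{a,{\bf g}}^{\vec{\alpha}}$ and ${}^{\psi}\mathfrak C_{r,a,{\bf h}}^{\vec{\beta}}$, the volume integral involving ${}^{\psi}\mathfrak D_{a,{\bf g}}^{\vec{\alpha}}$ and ${}^{\psi}\mathfrak D_{r,a,{\bf h}}^{\vec{\beta}}$, and the piecewise right-hand side with the $M$-terms.

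Next I would invoke the extra hypothesis ${}^{\psi}\mathfrak D_{a,{\bf g}}^{\vec{\alpha}}[\mathfrak{f}](q,y)={}^{\psi}\mathfrak D_{r,a,{\bf h}}^{\vec{\beta}}[\texttt{f}](q,y)=0$ on $J_a^b$. This makes the integrand of the third volume integral, namely ${}^{\psi}\mathfrak K_{a,{\bf g}}^{\vec{\alpha}}(q,x,y)\,{}^{\psi}\mathfrak D_{a,{\bf g}}^{\vec{\alpha}}[\mathfrak{f}](q,y)+{}^{\psi}\mathfrak D_{r,a,{\bf h}}^{\vec{\beta}}[\texttt{f}](q,y)\,{}^{\psi}\mathfrak K_{a,{\bf h}}^{\vec{\beta}}(q,x,y)$, identically zero, so that integral vanishes and the remaining terms of Theorem~\ref{B-P-F-D} are exactly the claimed identity. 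This is the same reduction step that passes from Proposition~\ref{Stokes} to its Cauchy-theorem corollary, now applied at the level of the Borel--Pompeiu formula.

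The only point that needs a brief remark is that the kernel ${}^{\psi}\mathfrak K_{a,{\bf g}}^{\vec{\alpha}}(q,x,y)={}^{\psi}\mathfrak P_{a,{\bf g}}^{\vec{\alpha}}[K_\psi(y-x)](q,x)$ is a genuine $\mathbb H(\mathbb C)$-valued function on the relevant region (it is obtained by applying the fractional operator ${}^{\psi}\mathfrak P_{a,{\bf g}}^{\vec{\alpha}}$ to the translated Cauchy kernel, exactly as in Theorem~\ref{B-P-F-D}), so multiplying it by the vanishing functions ${}^{\psi}\mathfrak D_{a,{\bf g}}^{\vec{\alpha}}[\mathfrak{f}]$ and ${}^{\psi}\mathfrak D_{r,a,{\bf h}}^{\vec{\beta}}[\texttt{f}]$ legitimately gives zero pointwise, and hence zero after integration; no delicate cancellation of singular contributions is being used. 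Consequently there is essentially no obstacle here: the corollary is a one-line substitution into Theorem~\ref{B-P-F-D}, and the main (trivial) task is simply to confirm that dropping the $\mathfrak D$-integral leaves precisely the displayed formula, including the unchanged $M_{a,{\bf g}}^{\vec{\alpha}}$ and $M_{a,{\bf h}}^{\vec{\beta}}$ terms on the right-hand side.
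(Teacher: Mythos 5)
Your proposal is correct and is exactly the argument the paper intends: Corollary \ref{C_I_F} is stated without a separate proof precisely because it is the specialization of Theorem \ref{B-P-F-D} in which the hypothesis ${}^{\psi}\mathfrak D_{a,{\bf g} }^{\vec{\alpha}}[\mathfrak{f}](q,\cdot) = {}^{\psi}\mathfrak D_{r,a,{\bf h} }^{\vec{\beta}}[\texttt{f}](q,\cdot)=0$ annihilates the third (volume) integral, leaving the remaining terms unchanged. Nothing further is needed.
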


\begin{observation}
If $g_k$ and $h_k$ are the identities functions for $k=0,1,2,3$ then the previous Borel-Pompieu and Cauchy type formulas reduce to that appear in \cite{GB}. Simultaneously, if  $g_k = h_k =\ln$ for $k=0,1,2,3$ they induce analogous formulas associated to the Hadamard fractional derivative.  
\end{observation}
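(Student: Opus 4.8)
The plan is to read the final statement as a pure specialization result, logically separate from the proof of Theorem~\ref{B-P-F-D} and Corollary~\ref{C_I_F}: both formulas are assembled entirely from the operators ${}^{\psi}\mathcal I_{a,{\bf g}}^x$, ${}^{\psi}\mathfrak D_{a,{\bf g}}^{\vec{\alpha}}$ (together with their right analogues), the correction operators ${}^{\psi}\mathfrak C_{a,{\bf g}}^{\vec{\alpha}}$, ${}^{\psi}\mathfrak C_{r,a,{\bf g}}^{\vec{\alpha}}$, the auxiliary operator ${}^{\psi}\mathfrak P_{a,{\bf g}}^{\vec{\alpha}}$, the kernel ${}^{\psi}\mathfrak K_{a,{\bf g}}^{\vec{\alpha}}={}^{\psi}\mathfrak P_{a,{\bf g}}^{\vec{\alpha}}[K_{\psi}(y-x)]$, and the remainder $M_{a,{\bf g}}^{\vec{\alpha}}$. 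It therefore suffices to substitute the two prescribed choices of the vector-valued function into the \emph{defining integrals} of these operators and verify that the outcome agrees, in the first instance, with the objects underlying the formulas of \cite{GB}, and, in the second, with their Hadamard counterparts. No new estimate is needed beyond the definitions already recorded above.

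First I would dispatch the identity case $g_k(x)=h_k(x)=x$, so that $g_k'\equiv 1$. The decisive observation is that each correction operator carries the scalar prefactor $g_j'(x_j)-1$; by the Definition of ${}^{\psi}\mathfrak C_{a,{\bf g}}^{\vec{\alpha}}$ and ${}^{\psi}\mathfrak C_{r,a,{\bf g}}^{\vec{\alpha}}$ this prefactor vanishes identically, whence
\[
{}^{\psi}\mathfrak C_{a,{\bf g}}^{\vec{\alpha}}[\mathfrak{f}]\equiv 0,\qquad {}^{\psi}\mathfrak C_{r,a,{\bf g}}^{\vec{\beta}}[\texttt{f}]\equiv 0.
\]
Consequently the two volume integrals containing ${}^{\psi}\mathfrak C$ drop out of both statements, which is precisely the feature distinguishing the general formulas from those of \cite{GB}. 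At the same time, setting $g_j'\equiv 1$ collapses ${\bf I}_{a_j^+,g_j}^{\alpha_j}$ and $D_{a_j^+,g_j}^{\alpha_j}$ to the classical Riemann--Liouville integral and derivative, so that ${}^{\psi}\mathcal I_{a,{\bf g}}^x$, ${}^{\psi}\mathfrak D_{a,{\bf g}}^{\vec{\alpha}}$, ${}^{\psi}\mathfrak P_{a,{\bf g}}^{\vec{\alpha}}$ and the kernel ${}^{\psi}\mathfrak K_{a,{\bf g}}^{\vec{\alpha}}$ reduce to the operators used in \cite{GB}; in the remainder one replaces $g_j(x_j)-g_j(a_j)$ by $x_j-a_j$, recovering the $M$-term of \cite{GB}. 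Matching these reduced operators term by term closes this case.

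Next I would treat the Hadamard case $g_k=h_k=\ln$ on $0<a_k<b_k$, where $g_k'(x)=1/x$. Here the correction operators do \emph{not} vanish; instead the combined prefactor $(g_j'(x_j)-1)/g_j'(x_j)$ simplifies to $1-x_j$, and substituting $g_j'(x_j)=1/x_j$ reproduces exactly the explicit Hadamard expressions already displayed in the Remark preceding Proposition~\ref{identities}, namely ${}^{\psi}\mathfrak D_{a,{\bf ln}}^{\vec{\alpha}}$, ${}^{\psi}\mathfrak D_{r,a,{\bf ln}}^{\vec{\alpha}}$, ${}^{\psi}\mathcal I_{a,{\bf ln}}^x$ and ${}^{\psi}\mathfrak C_{a,{\bf ln}}^{\vec{\alpha}}$ with prefactor $1-x_j$. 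Inserting these forms, along with the Hadamard kernel ${}^{\psi}\mathfrak K_{a,{\bf ln}}^{\vec{\alpha}}={}^{\psi}\mathfrak P_{a,{\bf ln}}^{\vec{\alpha}}[K_{\psi}(y-x)]$ and the corresponding $M$-term carrying $\ln x_j-\ln a_j$ in the denominator, into Theorem~\ref{B-P-F-D} and Corollary~\ref{C_I_F} yields the announced Hadamard-type Borel--Pompeiu and Cauchy formulas.

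I do not anticipate a genuine obstacle, since the whole argument is a substitution into already-established identities. The only point that demands care is the bookkeeping ensuring that the correction operators ${}^{\psi}\mathfrak C$ and ${}^{\psi}\mathfrak C_{r}$ collapse to zero in the identity case yet survive, with prefactor $1-x_j$, in the Hadamard case, together with the consistent specialization of both the kernel ${}^{\psi}\mathfrak K_{a,{\bf g}}^{\vec{\alpha}}$ and the remainder $M_{a,{\bf g}}^{\vec{\alpha}}$ to the chosen ${\bf g}$. Confirming that the identity-case reduction coincides with \cite{GB} then amounts to a routine term-by-term comparison, made transparent once the ${}^{\psi}\mathfrak C$-terms are seen to vanish.
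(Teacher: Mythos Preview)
Your proposal is correct, and indeed more detailed than what the paper offers: the paper states this observation as a remark without any proof, treating it as an immediate consequence of the definitions. Your substitution argument---noting that the prefactor $g_j'(x_j)-1$ annihilates the correction operators ${}^{\psi}\mathfrak C_{a,{\bf g}}^{\vec{\alpha}}$ and ${}^{\psi}\mathfrak C_{r,a,{\bf h}}^{\vec{\beta}}$ in the identity case and specializes to $1-x_j$ in the Hadamard case---is exactly the verification the paper leaves implicit, and it matches the explicit Hadamard expressions already recorded in the remark preceding Proposition~\ref{identities}.
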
 
 
\begin{observation}
As an particular case, under the hypothesis of Theorem \ref{B-P-F-D} with $g_j\in C^2(a_j,b_j)$ for all $j$, and appealing to Theorem \ref{BorelHyp} for $$(g_j'(y_j) )^{-1} ({\bf I}_{a_j^+,g_j}^{    { 1- \alpha_{j}  } } \mathfrak{f} )(q_0, \dots, y_j, \dots, q_3)$$ for $j=0,1,2,3$ yields
 \begin{align*}   &  \int_{\partial J_a^b} K_{\psi}( y -x)\sigma_{ y }^{\psi}   (g_j'(y_j))^{-1} ({\bf I}_{a_j^+,g_j}^{    { 1- \alpha_{j}  } } \mathfrak{f} )(q_0, \dots, y_j, \dots, q_3)  \\
 & -   
\int_{J_a^b} K_{\psi} (y-x) {}^{\psi}\mathcal D_{y}  [(g_j'(y_j) )^{-1}  ({\bf I}_{a_j^+,g_j}^{    { 1- \alpha_{j}  } } \mathfrak{f} )(q_0, \dots, y_j, \dots, q_3)] dy   \nonumber \\
		=  &      \left\{ \begin{array}{ll}   (g_j'(x_j))^{-1}  ({\bf I}_{a_j^+,g_j}^{    { 1- \alpha_{j}  } } \mathfrak{f} )(q_0, \dots, x_j, \dots, q_3)  , &     x\in 
		J_a^b ,  \\ 0 , &  x\in \mathbb H\setminus\overline{J_a^b}.                   
\end{array} \right. 
\end{align*}  
Therefore
\begin{align*}   &  \int_{\partial J_a^b} K_{\psi}( y -x)\sigma_{ y }^{\psi}   (g_j'(y_j))^{-1} ({\bf I}_{a_j^+,g_j}^{    { 1- \alpha_{j}  } } \mathfrak{f} )(q_0, \dots, y_j, \dots, q_3)  \\
 & -   
\int_{J_a^b} K_{\psi} (y-x)  {\psi}_j \frac{1}{g_j'(y_j) }  \frac{\partial}{\partial y_j}({\bf I}_{a_j^+,g_j}^{    { 1- \alpha_{j}  } } \mathfrak{f} )(q_0, \dots, y_j, \dots, q_3)] dy   \nonumber \\
 & +
\int_{J_a^b} K_{\psi} (y-x)  {\psi}_j \frac{g_j''(y_j)}{(g_j'(y_j))^2 }  ({\bf I}_{a_j^+,g_j}^{    { 1- \alpha_{j}  } } \mathfrak{f} )(q_0, \dots, y_j, \dots, q_3)] dy   \nonumber
\\
		=  &      \left\{ \begin{array}{ll}   (g_j')^{-1}(x_j)  ({\bf I}_{a_j^+,g_j}^{    { 1- \alpha_{j}  } } \mathfrak{f} )(q_0, \dots, x_j, \dots, q_3)  , &     x\in 
		J_a^b ,  \\ 0 , &  x\in \mathbb H\setminus\overline{J_a^b}                    
\end{array} \right. 
\end{align*} and adding all terms we get the following:
\begin{align*}   &  \int_{\partial J_a^b} K_{\psi}( y -x)\sigma_{ y }^{\psi}  \sum_{j=0}^3 (g_j'(y_j))^{-1} ({\bf I}_{a_j^+,g_j}^{    { 1- \alpha_{j}  } } \mathfrak{f} )(q_0, \dots, y_j, \dots, q_3)  \\
 & -   
\int_{J_a^b} K_{\psi} (y-x)  {}^{\psi}\mathfrak D_{a,{\bf g} }^{\vec{\alpha}}[\mathfrak{f}] (q,y) dy   \nonumber \\
 & +
\int_{J_a^b} K_{\psi} (y-x)  \sum_{j=0}^3   {\psi}_j \frac{g_j''(y_j)}{(g_j'(y_j))^2 }  ({\bf I}_{a_j^+,g_j}^{    { 1- \alpha_{j}  } } \mathfrak{f} )(q_0, \dots, y_j, \dots, q_3)] dy   \nonumber
\\
		=  &      \left\{ \begin{array}{ll}  \displaystyle  \sum_{j=0}^3  (g_j'(x_j) )^{-1} ({\bf I}_{a_j^+,g_j}^{    { 1- \alpha_{j}  } } \mathfrak{f} )(q_0, \dots, x_j, \dots, q_3)  , &     x\in 
		J_a^b ,  \\ 0 , &  x\in \mathbb H\setminus\overline{J_a^b}.                   
\end{array} \right. 
\end{align*}
If ${}^{\psi}\mathfrak D_{a,{\bf g} }^{\vec{\alpha}}[\mathfrak{f}] (q,\cdot)\equiv 0$ on $J_a^b$ we have that 

\begin{align*}   &  \int_{\partial J_a^b} K_{\psi}( y -x)\sigma_{ y }^{\psi}  \sum_{j=0}^3 (g_j'(y_j) )^{-1}({\bf I}_{a_j^+,g_j}^{    { 1- \alpha_{j}  } } \mathfrak{f})(q_0, \dots, y_j, \dots, q_3)  \\
 & +
\int_{J_a^b} K_{\psi} (y-x)  \sum_{j=0}^3   {\psi}_j \frac{g_j''(y_j)}{(g_j'(y_j))^2 }  ({\bf I}_{a_j^+,g_j}^{    { 1- \alpha_{j}  } } \mathfrak{f} )(q_0, \dots, y_j, \dots, q_3)] dy   \nonumber
\\
		=  &      \left\{ \begin{array}{ll}  \displaystyle  \sum_{j=0}^3  (g_j'(x_j) )^{-1} ({\bf I}_{a_j^+,g_j}^{    { 1- \alpha_{j}  } } \mathfrak{f} )(q_0, \dots, x_j, \dots, q_3)  , &     x\in 
		J_a^b ,  \\ 0 , &  x\in \mathbb H\setminus\overline{J_a^b}.                  
\end{array} \right. 
\end{align*}
\end{observation}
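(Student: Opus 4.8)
The statement is a chain of explicit computations erected on the classical Borel--Pompeiu formula \eqref{BorelHyp}, so the plan is to apply \eqref{BorelHyp} in each of the four coordinate directions to a suitable one-variable auxiliary function and then recombine. Throughout, $q\in J_a^b$ is held fixed (so that only the $y$-variable is active), and I take $\texttt{f}=0$ in \eqref{BorelHyp}, retaining only the left Cauchy-kernel terms.

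\emph{Step 1 (the per-coordinate identity).} For each $j$ I would set
$$F_j(y):=\frac{1}{g_j'(y_j)}\,({\bf I}_{a_j^+,g_j}^{1-\alpha_j}\mathfrak{f})(q_0,\dots,y_j,\dots,q_3),$$
an $\mathbb H(\mathbb C)$-valued function of $y\in\overline{J_a^b}$. First I check $F_j\in C^1(\overline{J_a^b},\mathbb H(\mathbb C))$: the factor $(g_j')^{-1}$ is $C^1$ because $g_j\in C^2$ with $g_j'\neq0$, while ${\bf I}_{a_j^+,g_j}^{1-\alpha_j}\mathfrak{f}$ is the $j$-th summand of ${}^{\psi}\mathcal I_a^{y}[\mathfrak{f}]$, which is $C^1$ by the standing hypothesis of Theorem \ref{B-P-F-D}; since this summand depends on $y$ only through $y_j$, the product is again $C^1$. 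As \eqref{BorelHyp} is $\mathbb C$-linear it applies verbatim to the $\mathbb H(\mathbb C)$-valued $F_j$, producing the first displayed identity.

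\emph{Step 2 (product rule).} Because $F_j$ depends on $y$ only through $y_j$, the left $\psi$-Fueter operator collapses to ${}^{\psi}\mathcal D_y F_j=\psi_j\,\partial_{y_j}F_j$. Expanding by the Leibniz rule and using $\partial_{y_j}(g_j'(y_j))^{-1}=-g_j''(y_j)/(g_j'(y_j))^2$ gives
$${}^{\psi}\mathcal D_y F_j=\psi_j\,\frac{1}{g_j'(y_j)}\,\partial_{y_j}({\bf I}_{a_j^+,g_j}^{1-\alpha_j}\mathfrak{f})-\psi_j\,\frac{g_j''(y_j)}{(g_j'(y_j))^2}\,({\bf I}_{a_j^+,g_j}^{1-\alpha_j}\mathfrak{f}),$$
and substituting this into the volume integral of Step 1 splits it exactly into the two integrals of the ``Therefore'' display.

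\emph{Steps 3--4 (summation and the null condition).} Summing the four identities over $j$, the boundary integrals and the right-hand sides add coordinatewise, while the first volume terms recombine through $\sum_j\psi_j(g_j')^{-1}\partial_{y_j}({\bf I}_{a_j^+,g_j}^{1-\alpha_j}\mathfrak{f})=\sum_j\psi_j D_{a_j^+,g_j}^{\alpha_j}\mathfrak{f}={}^{\psi}\mathfrak D_{a,{\bf g}}^{\vec{\alpha}}[\mathfrak{f}]$, directly from the definition of $D_{a_j^+,g_j}^{\alpha_j}$ as $(g_j')^{-1}\partial_{y_j}\circ{\bf I}_{a_j^+,g_j}^{1-\alpha_j}$; the second-derivative terms survive untouched, yielding the summed formula. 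Finally, imposing ${}^{\psi}\mathfrak D_{a,{\bf g}}^{\vec{\alpha}}[\mathfrak{f}](q,\cdot)\equiv0$ on $J_a^b$ annihilates the middle volume integral and delivers the last formula. The only non-routine point is the regularity bookkeeping in Step 1, namely guaranteeing $F_j\in C^1(\overline{J_a^b})$ so that \eqref{BorelHyp} is legitimately applicable; everything after that is the Leibniz rule together with a term-by-term summation.
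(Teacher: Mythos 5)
Your proof is correct and follows essentially the same route as the paper's own argument: apply the Borel--Pompeiu formula \eqref{BorelHyp} coordinatewise to $(g_j'(y_j))^{-1}({\bf I}_{a_j^+,g_j}^{1-\alpha_j}\mathfrak{f})(q_0,\dots,y_j,\dots,q_3)$, expand ${}^{\psi}\mathcal D_y$ by the Leibniz rule using $\partial_{y_j}(g_j'(y_j))^{-1}=-g_j''(y_j)/(g_j'(y_j))^2$, sum over $j$ to reassemble ${}^{\psi}\mathfrak D_{a,{\bf g}}^{\vec{\alpha}}[\mathfrak{f}]$, and finally impose its vanishing. Your explicit check that each $F_j$ lies in $C^1(\overline{J_a^b},\mathbb H(\mathbb C))$ (via $g_j\in C^2$, $g_j'\neq 0$, and the $C^1$ hypothesis on $x\mapsto{}^{\psi}\mathcal I_a^x[\mathfrak{f}]$) is sound bookkeeping that the paper leaves implicit.
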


\section*{Acknowledgments}
The authors wish to thank the Instituto Polit\'ecnico Nacional (grant numbers SIP20220017, SIP20221274)} for partial support.

\end{document}